\documentclass[reqno]{amsart}
\usepackage{setspace,tikz,xcolor,mathrsfs,listings,multicol}
\usepackage{amssymb}
\usepackage{rotating}
\usepackage[centertableaux]{ytableau}
\usepackage{enumerate}
\usepackage[all,cmtip]{xy}
\usetikzlibrary{arrows,matrix,positioning}
\usepackage{soul}
\tikzset{>=stealth',
  head/.style = {fill = white, text=black},
  plaque/.style = {draw, rectangle, minimum size = 10mm}, 
  pil/.style={->,thick},
  junct/.style = {draw,circle,inner sep=0.5pt,outer sep=0pt, fill=black}
  }

\usepackage[colorlinks=true, pdfstartview=FitV, linkcolor=blue, citecolor=blue, urlcolor=blue]{hyperref}


\newcommand{\G}{\mathfrak{G}}
\newcommand{\xx}{\mathbf{x}}

\newcommand{\iso}{\cong}
\newcommand{\bbb}{\mathsf{b}}

\newcommand{\fsl}{\mathfrak{sl}}
\newcommand{\sym}{S}

\DeclareMathOperator{\Stab}{Stab} 
\DeclareMathOperator{\wt}{wt} 
\DeclareMathOperator{\excess}{ex} 
\DeclareMathOperator{\ch}{ch} 
\DeclareMathOperator{\svt}{SV} 
\DeclareMathOperator{\skyline}{SLT} 
\DeclareMathOperator{\Gr}{Gr} 

\newcommand{\ZZ}{\mathbb{Z}}

\newcommand{\CC}{\mathbb{C}}

\newcommand{\mcC}{\mathcal{C}}
\newcommand{\mcD}{\mathcal{D}}
\newcommand{\mcK}{\mathcal{K}}

\newcommand{\bplus}{{\color{blue}+}}
\newcommand{\bminus}{{\color{red}-}}


\usepackage{listings}
\lstdefinelanguage{Sage}[]{Python}
{morekeywords={False,sage,True},sensitive=true}
\lstset{
  frame=single,
  showtabs=False,
  showspaces=False,
  showstringspaces=False,
  commentstyle={\ttfamily\color{dgreencolor}},
  keywordstyle={\ttfamily\color{dbluecolor}\bfseries},
  stringstyle={\ttfamily\color{dgraycolor}\bfseries},
  language=Sage,
  basicstyle={\footnotesize\ttfamily},
  aboveskip=0.75em,
  belowskip=0.75em,
  xleftmargin=.15in,
}
\definecolor{dblackcolor}{rgb}{0.0,0.0,0.0}
\definecolor{dbluecolor}{rgb}{0.01,0.02,0.7}
\definecolor{dgreencolor}{rgb}{0.2,0.4,0.0}
\definecolor{dgraycolor}{rgb}{0.30,0.3,0.30}

\usepackage{xparse}

\makeatletter
\protected\def\specialmergetwolists{%
  \begingroup
  \@ifstar{\def\cnta{1}\@specialmergetwolists}
    {\def\cnta{0}\@specialmergetwolists}%
}
\def\@specialmergetwolists#1#2#3#4{%
  \def\tempa##1##2{%
    \edef##2{%
      \ifnum\cnta=\@ne\else\expandafter\@firstoftwo\fi
      \unexpanded\expandafter{##1}%
    }%
  }%
  \tempa{#2}\tempb\tempa{#3}\tempa
  \def\cnta{0}\def#4{}%
  \foreach \x in \tempb{%
    \xdef\cnta{\the\numexpr\cnta+1}%
    \gdef\cntb{0}%
    \foreach \y in \tempa{%
      \xdef\cntb{\the\numexpr\cntb+1}%
      \ifnum\cntb=\cnta\relax
        \xdef#4{#4\ifx#4\empty\else,\fi\x#1\y}%
        \breakforeach
      \fi
    }%
  }%
  \endgroup
}
\makeatother

\usepackage{xparse}
\DeclareDocumentCommand\rpp{ m m g }{
	\foreach \x [count=\s from 1] in {#1}{
	        {\ifnum\s=1
	                \draw (0,-\s)--(\x,-\s);
	                \fi}
	   \draw (0,-\s-1) to (\x,-\s-1);
	   \foreach \y in {0, ..., \x} {\draw (\y,-\s)--(\y,-\s-1);}
	}
	\specialmergetwolists{/}{#1}{#2}\ziplist
	\foreach \x/\y [count=\yi from 1] in \ziplist{
	    \node[anchor=west,font=\small] at (\x,-\yi - .5) {$\y$};
	}
	\IfValueT {#3}
	{\foreach \z [count=\zi from 1] in {#3} {\node[anchor=east,font=\small] at (0,-\zi - .5) {$\z$};}}
	{}
}

\definecolor{darkred}{rgb}{0.7,0,0} 
\newcommand{\defn}[1]{{\color{darkred}\emph{#1}}} 

\usepackage[colorinlistoftodos]{todonotes}

\theoremstyle{plain}
\newtheorem{thm}{Theorem}[section]
\newtheorem{lemma}[thm]{Lemma}
\newtheorem{conj}[thm]{Conjecture}
\newtheorem{prop}[thm]{Proposition}
\newtheorem{cor}[thm]{Corollary}
\theoremstyle{definition}
\newtheorem{dfn}[thm]{Definition}
\newtheorem{ex}[thm]{Example}

\newtheorem{remark}[thm]{Remark}
\newtheorem{prob}[thm]{Open Problem}
\numberwithin{equation}{section}

\begin{document}
\title[Krystals for rectangular shapes]{K-theoretic crystals for set-valued tableaux of rectangular shapes}

\author[O.~Pechenik]{Oliver Pechenik}
\address[O.~Pechenik]{Department of Mathematics, University of Michigan, Ann Arbor, MI 48109, USA}
\curraddr{Department of Combinatorics \& Optimization, University of Waterloo, Waterloo, ON N2L 3G1 (Canada)}
\email{oliver.pechenik@uwaterloo.ca}
\urladdr{http://www.math.uwaterloo.ca/~opecheni/}

\author[T.~Scrimshaw]{Travis Scrimshaw}
\address[T.~Scrimshaw]{School of Mathematics and Physics, The University of Queensland, St.\ Lucia, QLD 4072, Australia}
\curraddr{Osaka City University Advanced Mathematical Institute, 
Osaka City University, 
3-3-138 Sugimoto, Sumiyoshi-ku Osaka 558-8585 (Japan)}
\email{tcscrims@gmail.com}
\urladdr{https://tscrim.github.io/}

\keywords{Grothendieck polynomial, crystal, Lascoux polynomial, quantum group, set-valued tableau, Kohnert move, skyline tableau}
\subjclass[2010]{05E05, 05A19, 14M15, 17B37}

\thanks{OP was partially supported by the National Science Foundation Mathematical Sciences Postdoctoral Research Fellowship \#1703696. OP also acknowledges support from NSERC Discovery Grant RGPIN-2021-02391 and Launch Supplement DGECR-2021-00010.
TS was partially supported by the Australian Research Council DP170102648.}

\begin{abstract}
In earlier work with C.~Monical, we introduced the notion of a K-crystal, with applications to K-theoretic Schubert calculus and the study of Lascoux polynomials.
We conjectured that such a K-crystal structure existed on the set of semistandard set-valued tableaux of any fixed rectangular shape. Here, we establish this conjecture by explicitly constructing the K-crystal operators. As a consequence, we establish the first combinatorial formula for Lascoux polynomials $L_{w\lambda}$ when $\lambda$ is a multiple of a fundamental weight as the sum over flagged set-valued tableaux. Using this result, we then prove corresponding cases of conjectures of Ross--Yong (2015) and Monical (2016) by constructing bijections with the respective combinatorial objects.
\end{abstract}

\maketitle

\section{Introduction}
\label{sec:introduction}

In classical Schubert calculus, we can study the cohomology ring of the Grassmannian $\Gr(k, n)$, the parameter space for $k$-dimensional subspaces of $\CC^n$, with respect to the basis given by the Poincar\'e duals of the Schubert varieties $X_{\lambda}$ that decompose $\Gr(k, n)$. In this context, the cohomology classes $[X_{\lambda}]$ can be represented by Schur polynomials $s_{\lambda}$, where the partition $\lambda$ sits inside a $k \times (n-k)$ rectangle. A more modern approach is to study $\Gr(k, n)$ via connective K-theory, where the Schubert class $[X_{\lambda}]$ is given as the push-forward of the class for any Bott--Samelson resolution of $X_{\lambda}$. Here, polynomial representatives are given by symmetric (or stable) $\beta$-Grothendieck polynomials~\cite{FK94,Hudson}.

The Schur polynomial $s_{\lambda}$ can be described combinatorially as a generating function for semistandard (Young) tableaux of shape $\lambda$ (see, \textit{e.g.},~\cite[Ch.~7]{ECII}). In addition, $s_{\lambda}$ has a representation-theoretic interpretation as the character of the highest weight representation $V(\lambda)$ of the Lie algebra $\fsl_n$ of traceless $n \times n$ matrices (see, \textit{e.g.},~\cite[Ch.~8]{Fulton}). One way to compute $s_{\lambda}$ is by applying a product of Demazure operators $\pi_{w_0}$ corresponding to the reverse permutation $w_0$ to the monomial $\xx^{\lambda} := x_1^{\lambda_1} \dotsm x_n^{\lambda_n}$. Generalizing this formula refines Schur polynomials to the key polynomials $\kappa_{w\lambda} := \pi_w \xx^{\lambda}$, which may be understood as characters of a Demazure modules $V_w(\lambda)$~\cite{Demazure74} (hence, $\kappa_{w\lambda}$ is also known as a Demazure character); geometrically, $V_m(\lambda)$ may be constructed as global sections of a line bundle on a flag variety~\cite{Andersen85,LMS79}.

For the symmetric Grothendieck polynomial $\G_{\lambda}$, A.~Buch~\cite{Buch02} gave a combinatorial interpretation as the generating function for semistandard set-valued tableaux of shape $\lambda$. A.~Lascoux~\cite{Lascoux01} gave a deformation of the Demazure operators, called Demazure--Lascoux operators $\varpi_w$, such that $\G_{\lambda} = \varpi_{w_0} \xx^{\lambda}$. The analogous deformation of key polynomials, the so-called Lascoux polynomials $L_{w\lambda} = \varpi_w \xx^{\lambda}$, remain mysterious as currently there is no known geometric, representation-theoretic, or combinatorial interpretation, despite recent attention~\cite{RY15,Kirillov:notes,Monical16,MPS18II}. Yet, combinatorial formulas for Lascoux polynomials have been conjectured by C.~Monical~\cite[Conj.~5.3]{Monical16} and by C.~Ross and A.~Yong~\cite[Conj.~1.4]{RY15}  (with the generic $\beta$ version by A.~Kirillov~\cite[Fn.~14]{Kirillov:notes}).

One way to connect the combinatorial and representation-theoretic interpretations of key polynomials is through M.~Kashiwara's theory of crystal bases for representations of quantum groups~\cite{K90,K91}. Indeed, Kashiwara showed that the Demazure module $V_w(\lambda)$ has a crystal basis and could be described as a subcrystal $B_w(\lambda)$, called a Demazure crystal, of the highest weight crystal $B(\lambda)$~\cite{K93,L95-3}. For $U_q(\fsl_n)$, the crystal $B(\lambda)$ may be realized as the set of semistandard tableaux of shape $\lambda$, and the tableaux for the subcrystal $B_w(\lambda)$ are characterized by a combinatorial condition on their corresponding key tableaux~\cite{LS90}.

In our previous paper with C.~Monical~\cite{MPS18}, we initiated an analogous approach to Demazure crystals for Lascoux polynomials. We first gave a $U_q(\fsl_n)$-crystal structure to the set of semistandard set-valued tableaux. Then we proposed an enriched crystal structure with the property that the Lascoux polynomials appear as the characters of our K-theoretic analogs of Demazure subcrystals. We coined this enriched structure a K-crystal. We established the existence of K-crystals for single rows and columns, but we discovered that no such structure exists for general shapes.  Nonetheless, we conjectured~\cite[Conj.~7.12]{MPS18} that K-crystals exist for all rectangular shapes. Our first main result is a proof of this conjecture. Our proof gives rise to a combinatorial formula for the class of Lascoux polynomials indexed by a weight in the Weyl group orbit of a multiple of a fundamental weight \textit{i.e.}, a rectangular shape partition).  We then use this formula to establish the corresponding cases of the Ross--Yong--Kirillov and Monical conjectures. To our knowledge, these are the only proven combinatorial formulas for any class of Lascoux polynomials.\footnote{After this paper appeared as a preprint, subsequent work of V.~Buciumas, the second author, and K.~Weber~\cite{BSW20} gave another combinatorial formula for Lascoux polynomials. In particular, they proved the formula proposed here in our Conjecture~\ref{conj:Key}.}

Let us remark on why our proposed K-crystal structure exists for a rectangular shape $\lambda$, but not for general shapes. In our work with C.~Monical~\cite{MPS18}, we proposed a slightly weaker structure for general $\lambda$ that depends on a choice of a reduced expression for $w_0$. The key distinction appears to be that, in the rectangular case, the minimal-length coset representatives (such as the relevant parabolic $w_0$) that index Lascoux polynomials are all fully-commutative \textit{i.e.}, all reduced expressions differ only by commutations)~\cite{Stembridge96}. However, for more general shapes, such as $\lambda = (2,1)$ described in~\cite[Fig.~6,7]{MPS18}, one needs to apply the braid relations $s_i s_{i+1} s_i = s_{i+1} s_i s_{i+1}$ to get all possible reduced expressions. Subsequently, we believe that, in general, K-crystal structures depend on choosing a commutation class of the reduced words for the appropriate parabolic $w_0$ (see also~\cite[\S7.3]{MPS18}). This fact seems related to an analogous dependence for Schubert classes in cohomology theories more general than connective K-theory (see, \textit{e.g.},~\cite{BE90,GR13,LZ17}). Moreover, in the rectangular case, we have a flagging condition to characterize the tableaux in the K-Demazure crystal, and we expect an analogous key tableau condition to work for general shapes.

This paper is organized as follows.
In Section~\ref{sec:background}, we recall the necessary background.
In Section~\ref{sec:K_rectangles}, we construct a K-crystal structure on set-valued tableaux of rectangular shapes.
In Section~\ref{sec:Kohnert} (resp.~Section~\ref{sec:skyline_tableaux}), we prove the conjectural combinatorial interpretation of Lascoux polynomials for rectangular shapes due to Ross--Yong--Kirillov (resp.~Monical).
In Section~\ref{sec:Key_tableaux}, we describe our conjecture for key tableaux of set-valued tableaux and their relationship with Lascoux polynomials.

\subsection*{Acknowledgements}

OP is grateful for interesting conversations with Bob Proctor.  TS would like to thank Takeshi Ikeda, Tomoo Matsumura, and Shogo Sugimoto for stimulating discussions. The authors thank Cara Monical for useful discussions. The authors thank the referees for their valuable comments and suggestions. In particular, we thank one of the referees for the bridging modification to the K-crystal operators. This work benefited from computations using \textsc{SageMath}~\cite{sage, combinat}.

\section{Background}
\label{sec:background}

In this section, we give background for the symmetric group, the crystal structures on semistandard set-valued tableaux, Lascoux and Grothendieck polynomials, and the related conjectures.
Let $\xx = (x_1, x_2, x_3, \ldots)$ be a countable vector of commuting indeterminates.
For a tuple $\alpha = (\alpha_1, \alpha_2, \ldots)$, define $\xx^{\alpha} = x_1^{\alpha_1} x_2^{\alpha_2} \cdots$.
We use the English orientation convention for both partitions and tableaux.

\subsection{Properties of symmetric groups}

Let $\sym_n$ denote the symmetric group on $\{1, \dotsc, n\}$ with simple transpositions $\{s_i \mid 1 \leq i < n \}$, where $s_i$ interchanges $i$ and $i+1$.
Let $w_0 \in \sym_n$ be the reverse permutation $n(n-1) \cdots 2 1$.
A reduced expression for a permutation $w \in \sym_n$ is an expression for $w$ as a minimal-length product of simple transpositions.
The length of a permutation is the length of any reduced expression for it; the element $w_0$ is the element with greatest length in $\sym_n$.
We recall that (strong) Bruhat order on $\sym_n$ is defined by $v \leq w$ if there exists a reduced expression for $v$ that is a subword of a reduced expression for $w$.

Consider a partition $\lambda$ (of length at most $n$) as a word of length $n$ by appending $0$'s as necessary.
Note that $\sym_n$ has a natural action on words of length $n$, which corresponds to the natural action on $\ZZ^n$ of the Weyl group of $\fsl_n$ (which we can identify with the group of permutation matrices).
Let $\Stab_n(\lambda) = \{w \in \sym_n \mid w \lambda = \lambda\}$ denote the stabilizer of $\lambda$.
Recall that $\Stab_n(\lambda)$ is a parabolic subgroup of $\sym_n$ and that every coset in the quotient of a Coxeter group by a parabolic subgroup has a unique minimal length representative.
Thus, let $\sym_n^{\lambda}$ denote the set of minimal length coset representatives of $\sym_n / \Stab_n(\lambda)$, and for any $w \in \sym_n$, let $\lfloor w \rfloor$ denote the corresponding minimal length coset representative of $w$ in $\sym_n / \Stab_n(\lambda)$.
For more on Coxeter groups, we refer the reader to, \textit{e.g.},~\cite{BB05,Davis08,Humphreys90,Kane01}.

\subsection{Set-valued tableaux and their crystal structure}

Let $\lambda$ be a partition, which we often consider as a Young diagram. 
A \defn{(semistandard) set-valued tableau of shape $\lambda$} is a filling $T$ of the boxes of $\lambda$ by finite nonempty sets of positive integers so that for every set $A$ to the left of a set $B$ in the same row, we have $\max A \leq \min B$, and for $C$ below $A$ in the same column, we have $\max A < \min C$. (This is a set-valued generalization of the usual semistandard condition on tableaux.)
For an integer $a$, we write $a \in T$ if there exists a box of $T$ containing a set $A$ with $a \in A$.
A semistandard set-valued tableau is a \defn{semistandard Young tableau} if all sets have size $1$.
Let $\svt^n(\lambda)$ denote the set of all set-valued tableaux of shape $\lambda$ with entries at most $n$.

Next we recall the crystal structure on $\svt^n(\lambda)$ from~\cite{MPS18}. We first recall the \defn{crystal operators} $e_i, f_i \colon \svt^n(\lambda) \to \svt^n(\lambda) \sqcup \{0\}$, where $i \in I := \{1, \dotsc, n-1\}$. We draw the crystals as a directed graph, where we have an $i$-colored edge $T \xrightarrow{i} U$ if and only if $f_i(T) = U$. For more details on crystals, we refer the reader to~\cite{BS17,K91}.

The crystal operator $f_i$ acts on $T \in \svt^n(\lambda)$ as follows: Write $\bplus$ above each column of $T$ containing $i$ but not $i+1$, and write $\bminus$ above each column containing $i+1$ but not $i$. Now cancel signs in ordered pairs $\bminus \bplus$. If every $\bplus$ thereby cancels, then $f_i T = 0$. Otherwise let $\bbb$ correspond to the box of the rightmost uncanceled $\bplus$. Then $f_i T$ is given by one of the following:
\begin{itemize}
\item if there exists an adjacent box $\bbb^{\rightarrow}$ immediately to the right of $\bbb$ that contains an $i$, then remove the $i$ from $\bbb^{\rightarrow}$ and add an $i+1$ to $\bbb$;
\item otherwise replace the $i$ in $\bbb$ with an $i+1$.
\end{itemize}

The action of $e_i$ is defined as follows: Construct the sequence $\bplus \cdots \bplus \bminus \cdots \bminus$ as above. If there is not an uncanceled $\bminus$, then $e_i T = 0$. Otherwise let $\bbb$ correspond to the box of the leftmost uncanceled $\bminus$. Then $e_i T$ is given by one of the following:
\begin{itemize}
\item if there exists an adjacent box $\bbb^{\leftarrow}$ immediately to the left of $\bbb$ that contains an $i+1$, then remove the $i+1$ from $\bbb^{\leftarrow}$ and add an $i$ to $\bbb$;
\item otherwise replace the $i+1$ in $\bbb$ with an $i$.
\end{itemize}

Identifying $\ZZ^n$ with the multiplicative group generated by $(x_1, \dotsc, x_n)$, we define the weight function $\wt \colon \svt^n(\lambda) \to \ZZ^n$ by $\wt(T) = \prod_{i=1}^n x_i^{c_i}$, where $c_i$ is the number of $A \in T$ such that $i \in A$.
Define $|\wt(T)| = \sum_{i=1}^n c_i$.
Let $U_q(\fsl_n)$ denote the Drinfel'd--Jimbo quantum group of the type $A_{n-1}$ Lie algebra $\fsl_n$, the Lie algebra of traceless $n \times n$ matrices over $\CC$. Let $B(\lambda)$ be the highest weight $U_q(\fsl_n)$-crystal of all semistandard Young tableaux of shape $\lambda$~\cite{K90,K91,KN94}.

\begin{thm}[{\cite[Thm.~3.9]{MPS18}}]
Let $\lambda$ be a partition. Then
\[
\svt^n(\lambda) \iso \bigoplus_{\lambda \subseteq \mu} B(\mu)^{\oplus M_{\lambda}^{\mu}},
\]
where the $M_{\lambda}^{\mu} = \lvert \{ T \in \svt^n(\lambda) \mid \wt(T) = \mu \text{ and } e_i T = 0 \text{ for all } i \in I\} \rvert$.
\end{thm}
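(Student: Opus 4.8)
The plan is to show that the operators $e_i,f_i$ endow $\svt^n(\lambda)$ with the structure of a \emph{normal} $U_q(\fsl_n)$-crystal. Once this is established, the general theory forces a decomposition $\svt^n(\lambda) \cong \bigoplus_{\mu} B(\mu)^{\oplus m_\mu}$ into highest weight crystals, and since each $B(\mu)$ contains a unique highest weight element (of weight $\mu$), counting highest weight elements by weight yields $m_\mu = M_\lambda^\mu$ at once. Thus the real content is (i) normality and (ii) the vanishing of $m_\mu$ unless $\lambda \subseteq \mu$.

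The abstract crystal axioms are essentially built into the signature construction, so I would dispatch them first. Setting $\varepsilon_i(T)$ and $\varphi_i(T)$ to be the numbers of uncanceled $\bminus$'s and $\bplus$'s in the bracketing for $i$, the rule gives $\varphi_i(T) - \varepsilon_i(T) = c_i - c_{i+1} = \langle \wt(T), \alpha_i^\vee \rangle$, while $f_i$ lowers $\varphi_i$ by one, raises $\varepsilon_i$ by one, and sends $\wt \mapsto \wt - \alpha_i$ (one fewer $i$, one more $i+1$), with $e_i$ its inverse where defined. The only genuine check is well-definedness, that $f_iT$ and $e_iT$ remain semistandard set-valued tableaux. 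This is a short case analysis exploiting that the modified box lies in a column carrying a true uncanceled sign: for instance, when $f_i$ adds an $i+1$ to a box $\bbb$, the column of $\bbb$ contains no $i+1$, which keeps the column strictly increasing after the insertion.

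Normality is the crux, and here I would relate $\svt^n(\lambda)$ to ordinary tableaux through \emph{uncrowding}. The uncrowding map $\Phi$ sends a set-valued tableau $T$ to a pair $(P(T), Q(T))$, where $P(T)$ is an ordinary semistandard tableau of some shape $\mu \supseteq \lambda$---obtained by bumping the surplus entries of overfull boxes out into new cells---and $Q(T)$ records the cells of $\mu/\lambda$; this is a content-preserving bijection onto $\bigsqcup_{\mu \supseteq \lambda} B(\mu) \times R(\mu/\lambda)$, where $R(\mu/\lambda)$ is a crystal-inert set of recording data. The key lemma is that $\Phi$ intertwines the crystal operators, $\Phi(f_i T) = (f_i P(T), Q(T))$ and similarly for $e_i$, so that the operators act purely on the $B(\mu)$ factor and fix $Q(T)$. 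Granting this, $\svt^n(\lambda) \cong \bigoplus_{\mu \supseteq \lambda} B(\mu)^{\oplus |R(\mu/\lambda)|}$ is manifestly normal, the index set is exactly $\{\mu : \lambda \subseteq \mu\}$ (settling (ii)), and comparison with the first paragraph identifies $M_\lambda^\mu = |R(\mu/\lambda)|$.

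I expect the intertwining lemma to be the main obstacle. The subtle branch is the inter-box move of $f_i$---removing an $i$ from a box and inserting an $i+1$ into its left neighbor---which must be matched, after uncrowding, with a single ordinary operator $f_i$ on $P(T)$. I would prove it by showing that one application of $f_i$ commutes with a single uncrowding insertion step and inducting on the number of surplus entries. The heart of the matter is that the column-based bracketing is engineered so that a box containing both $i$ and $i+1$ contributes no sign; this is precisely what makes the signature of $T$ agree with the ordinary tableau signature of $P(T)$, and hence what makes the two actions coincide. As an alternative to uncrowding, normality could instead be verified directly from Stembridge's local axioms for simply-laced crystals, and the range $\lambda \subseteq \mu$ re-derived by checking that the box-minima of a highest weight $T$ form the Yamanouchi tableau of shape $\lambda$, forcing each value $r$ to occur at least $\lambda_r$ times.
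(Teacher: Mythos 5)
This theorem is not proved in the paper at all: it is background, quoted with attribution from the authors' earlier work \cite[Thm.~3.9]{MPS18}, so there is no in-paper proof to compare your argument against; I can only assess your proposal on its own terms. Your overall architecture is the right one, and your step (i) is fine as sketched: since the entries of a column strictly increase in the set-valued sense, each column contains at most one box with a given value, so the sign counts do give $\varphi_i - \varepsilon_i = c_i - c_{i+1}$, and both branches of $f_i$ trade an uncanceled $\bplus$ for a $\bminus$. The problem is step (ii): the assertion that uncrowding satisfies $\Phi(f_i T) = (f_i P(T), Q(T))$ \emph{is} the theorem. It is a true statement (results of exactly this kind are how the decomposition is treated in the literature on uncrowding of set-valued and hook-valued tableaux), but it is not a lemma that falls to a two-line induction: one must first fix a precise uncrowding convention compatible with the column signature rule, and the inter-box branch of $f_i$ (removing an $i$ from one box and adding an $i+1$ to its left neighbor) interacts with the bumping chains in a way that makes naive ``commute $f_i$ past one insertion step'' arguments fragment into many cases; rigorous treatments typically detour through reading words and Knuth/plactic relations instead. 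As written, your proposal is a correct roadmap whose central claim is assumed rather than proved.

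There is also one outright false claim, in your closing alternative route: it is not true that the box-minima of a highest weight $T$ form the Yamanouchi tableau of shape $\lambda$. The paper's own Figure~\ref{fig:Krystal_ex_22} contains a counterexample: the tableau $T \in \svt^3(2^2)$ with first row $\{1\}, \{1,2\}$ and second row $\{2\}, \{3\}$ is highest weight (for $i=1$ both columns contain $1$ and $2$, hence carry no sign; for $i=2$ the signature is a single uncanceled $\bplus$), yet its box-minima form the tableau with rows $(1,1)$ and $(2,3)$, which is not Yamanouchi. The needed containment $\lambda \subseteq \mu$ therefore does not follow by that mechanism; it is, however, automatic in your main route, since $\mu$ is the shape of $P(T)$, which contains $\lambda$ by construction, and a highest weight $P(T)$ is Yamanouchi so that its content equals its shape. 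So: keep the uncrowding route, but understand that proving the intertwining lemma honestly is where essentially all of the work lies, and discard (or repair) the Yamanouchi argument.
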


See Figure~\ref{fig:svt_crystal_ex} for an example.

\begin{figure}
\[
\ytableausetup{boxsize=2.0em}
\begin{tikzpicture}[>=latex,scale=2,every node/.style={scale=0.7}]
\node (m) at (0,0) {$\ytableaushort{11,22}$};
\node (f2m) at (0,-1) {$\ytableaushort{11,23}$};
\node (f22m) at (1,-2) {$\ytableaushort{11,33}$};
\node (f12m) at (-1,-2) {$\ytableaushort{12,23}$};
\node (f122m) at (0,-3) {$\ytableaushort{12,33}$};
\node (f1122m) at (0,-4) {$\ytableaushort{22,33}$};
\draw[->,red] (m) -- node[midway,right] {\small $2$} (f2m);
\draw[->,red] (f2m) -- node[midway,above right] {\small $2$} (f22m);
\draw[->,blue] (f2m) -- node[midway, above left] {\small $1$} (f12m);
\draw[->,red] (f12m) -- node[midway,below left] {\small $2$} (f122m);
\draw[->,blue] (f22m) -- node[midway, below right] {\small $1$} (f122m);
\draw[->,blue] (f122m) -- node[midway, right] {\small $1$} (f1122m);
\node (t) at (2,0) {$\ytableaushort{11,2{2,\!3}}$};
\node (f2t) at (2,-1) {$\ytableaushort{11,{2,\!3}3}$};
\node (f12t) at (2,-2) {$\ytableaushort{12,{2,\!3}3}$};
\draw[->,red] (t) -- node[midway,right] {\small $2$} (f2t);
\draw[->,blue] (f2t) -- node[midway, right] {\small $1$} (f12t);
\node (b) at (3,0) {$\ytableaushort{1{1,\!2},23}$};
\node (f2b) at (3,-1) {$\ytableaushort{1{1,\!2},33}$};
\node (f12b) at (3,-2) {$\ytableaushort{{1,\!2}2,33}$};
\draw[->,red] (b) -- node[midway,right] {\small $2$} (f2b);
\draw[->,blue] (f2b) -- node[midway, right] {\small $1$} (f12b);
\node (all) at (4,0) {$\ytableaushort{1{1,\!2},{2,\!3}3}$};
\end{tikzpicture}
\]
\ytableausetup{boxsize=0.8em}
\caption{The $U_q(\fsl_3)$-crystal structure on $\svt^3(2,2)$.}
\label{fig:svt_crystal_ex}
\end{figure}

\subsection{Lascoux polynomials and symmetric Grothendieck polynomials}

For $1 \leq i < n$, the \defn{Demazure operator} $\pi_i$  acts on $\ZZ[\beta][x_1, \dotsc, x_n]$ by 
\[
\pi_i f = \frac{x_i \cdot f - x_{i+1} \cdot s_i f}{x_i - x_{i+1}},
\]
where $s_i f(\ldots, x_i, x_{i+1}, \ldots) = f(\ldots x_{i+1}, x_i, \ldots)$,
and the \defn{Demazure--Lascoux operator} $\varpi_i$ acts by
\[
\varpi_i f
= \pi_i\bigl( (1  + \beta x_{i+1}) \cdot f \bigr) = \pi_i f + \beta \cdot \pi_i (x_{i+1} \cdot f).
\]
The Demazure--Lascoux operators (and Demazure operators) are known to satisfy the braid relations:
\begin{align*}
\varpi_i \varpi_j &= \varpi_j \varpi_i \hspace{1.5cm} \text{for $|i-j| > 1$}, \\
\varpi_i \varpi_{i+1} \varpi_i &= \varpi_{i+1} \varpi_i \varpi_{i+1}
\end{align*} 
(and similarly for $\pi_i$)~\cite{Lascoux01}. Thus for any permutation $w \in \sym_n$, one may unambiguously define $\varpi_w := \varpi_{i_1} \varpi_{i_2} \dotsm \varpi_{i_{\ell}}$, where $s_{i_1} s_{i_2} \dotsm s_{i_{\ell}}$ is some reduced expression for $w$.

Since $\varpi_w$ does not depend on the choice of reduced expression, we can define the \defn{Lascoux polynomials}~\cite{Lascoux01} as
\[
L_a(\xx; \beta) := \varpi_w \xx^{\lambda}
\]
for any $a \in \ZZ_{\geq 0}^n$, where $\lambda$ is the sorting of $a$ into a partition and $w \in \sym_n^{\lambda}$ is the unique element such that $w \lambda = a$.
The \defn{symmetric Grothendieck polynomial} can be defined as the $n$ variable truncation\footnote{The untruncated version is called a \emph{stable Grothendieck polynomial} as it is the stable limit $n \to \infty$ of the original Grothendieck polynomials (with $\beta = -1$) of A.~Lascoux--M.-P.~Sch\"{u}tzenberger~\cite{LS82,LS83}.} of $L_{w_0\lambda}(\xx; \beta)$ and is known~\cite[Thm.~3.1]{Buch02} to be given combinatorially by 
\begin{equation}
\label{eq:Grothendieck_defn}
L_{w_0\lambda}(\xx; \beta) = \sum_{T \in \svt^n(\lambda)} \wt_{\beta}(T),
\end{equation}
where
\[
\wt_{\beta}(T) := \beta^{\excess(T)} \wt(T),
\qquad\quad
\excess(T) := |\wt(T)| - |\lambda| = \sum_{A \in T} \bigl( |A| - 1 \bigr),
\]
where $A$ ranges over
 the entries of $T$.
The statistic $\excess \colon \svt^n(\lambda) \to \ZZ$ is known as \defn{excess}, and we call $\wt_{\beta}(T)$ the \defn{$\beta$-weight}.
There is currently no known geometric or representation-theoretic interpretation for general Lascoux polynomials. However, there are two conjectural combinatorial descriptions, which we now recall.

The first conjectural combinatorial rule was introduced in~\cite{RY15}. To state it, we begin by recalling the notion of a \defn{K-Kohnert diagram} to be a subset $D$ of $\ZZ^2_{>0}$, which we realize as boxes, and a subset $M \subseteq D$ of boxes that are marked. The conjectural rule to compute the Lascoux polynomial is as follows. Start with some $a = (a_1, \dotsc, a_n) \in \ZZ^n_{\geq 0}$ and draw the initial K-Kohnert diagram as a \defn{skyline diagram} by putting a box at each position $\{(i,y) \mid i \in [n], 1 \leq y \leq a_i\}$ (in Cartesian coordinates), marking no boxes. Then we successively apply any sequence of the following operations.
\begin{description}
\item[\defn{Kohnert move}] Move any unmarked box at the top of a column into the rightmost open position to its left and in the same row such that it does not pass through a marked box.\footnote{In the published version of~\cite{RY15}, it is misstated that a Kohnert move could move the unmarked box through a marked box. See~\cite{RossREU,RY17}.}
\item[\defn{K-Kohnert move}] Perform a Kohnert move but leave a marked box behind.
\end{description}
Let $\mcD_a$ denote the resulting set of K-Kohnert diagrams obtainable from the original skyline diagram for $a$.
Define the $\beta$-weight of a K-Kohnert diagram $D$ by $\wt_{\beta}(D) = \beta^e \prod_{i=1}^n x_i^{c_i}$, where $e$ (resp.~$c_i$) is the number of marked boxes (resp.~boxes in column~$i$) in~$D$.

\begin{conj}[{\cite[Conj.~1.4]{RY15},\cite[Fn.~14]{Kirillov:notes}}]
\label{conj:K_skyline}
We have
\[
L_a(\xx; \beta) = \sum_{D \in \mcD_a}  \wt_{\beta}(D).
\]
\end{conj}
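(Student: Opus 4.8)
The plan is to prove the identity by induction on the length of the minimal coset representative $w \in \sym_n^{\lambda}$ with $a = w\lambda$, matching the recursive structure of $L_a$ under Demazure--Lascoux operators against a parallel recursion on the set of K-Kohnert diagrams. Writing $F_a := \sum_{D \in \mcD_a} \wt_{\beta}(D)$ for the right-hand side, the whole statement reduces to a base case $F_{\lambda} = \xx^{\lambda} = L_{\lambda}$ together with an inductive step $F_{s_i a} = \varpi_i F_a$ valid whenever $a_i > a_{i+1}$ and $\ell(s_i w) = \ell(w) + 1$. The latter is exactly the regime in which $\varpi_{s_i w} = \varpi_i \varpi_w$, so that $L_{s_i a} = \varpi_i L_a$ follows from the braid-invariant definition of $\varpi_w$; establishing the matching recursion for $F$ therefore upgrades the base case to the full claim for every $a$.

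For the base case, take $a = \lambda$ dominant. The skyline diagram then has columns of weakly decreasing heights $\lambda_1 \geq \lambda_2 \geq \cdots$, so the top box of any column sits in a row whose positions to the left are all already occupied; hence no Kohnert or K-Kohnert move applies, $\mcD_{\lambda}$ is the single initial diagram, and $F_{\lambda} = \xx^{\lambda}$. Since $\varpi_{\mathrm{id}} = \mathrm{id}$, this equals $L_{\lambda}$, and the induction has a starting point.

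For the inductive step I would prove $F_{s_i a} = \varpi_i F_a = \pi_i F_a + \beta\,\pi_i(x_{i+1} F_a)$ by organizing $\mcD_{s_i a}$ into $\varpi_i$-strings. Concretely, I would introduce an equivalence on K-Kohnert diagrams that forgets how ordinary and marked boxes are distributed between columns $i$ and $i+1$, recording only their combined occupancy together with all data in the remaining columns. The claim to establish is that each equivalence class, read as a polynomial in $x_i$ and $x_{i+1}$, equals $\varpi_i$ applied to a single monomial, and that this monomial is the $\beta$-weight of a corresponding diagram in $\mcD_a$; summing over classes then yields $F_{s_i a} = \varpi_i F_a$. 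Separating the marked boxes in columns $i, i+1$ makes the two pieces of $\varpi_i$ transparent: the ordinary boxes drive the $\pi_i$-symmetrization between the two columns, while a box duplicated by a K-Kohnert move contributes the extra factor $\beta x_{i+1}$, reproducing the operator $\pi_i(1 + \beta x_{i+1})$.

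The hard part will be showing that the Kohnert dynamics genuinely respect this $(i, i+1)$-string decomposition. Two phenomena obstruct a naive argument. First, the moves are nonlocal: a box leaving the top of column $i+1$ may travel past column $i$ into columns further to the left, and marked boxes act as barriers that couple columns $i$ and $i+1$ to the rest of the diagram, so the string through a given diagram is not determined by columns $i, i+1$ in isolation. Second, when $w$ is not fully commutative one may reach $s_i a$ via the braid relation $s_i s_{i+1} s_i = s_{i+1} s_i s_{i+1}$, and one must verify that the induced recursion on $\mcD$ is independent of this choice, precisely the commutation-class dependence flagged in the introduction. I expect the crux to be an explicit compatibility lemma: constructing, directly on K-Kohnert diagrams, raising and lowering maps in the $i$th direction that intertwine with the move generation and carve $\mcD_{s_i a}$ into complete $\varpi_i$-strings fibered over $\mcD_a$. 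For rectangular $\lambda$ this compatibility can be routed through the weight-preserving bijection with $\svt^n(\lambda)$ and the K-crystal structure established earlier in the paper; securing it uniformly for all $a$, in the presence of braid moves, is where the principal difficulty lies.
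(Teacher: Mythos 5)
There is a genuine gap, and it sits exactly where you place it: the ``compatibility lemma'' asserting that $\mcD_{s_i a}$ decomposes into complete $\varpi_i$-strings fibered over $\mcD_a$ is the entire content of the conjecture, and your proposal contains no construction of the raising/lowering maps, no precise definition of the equivalence classes, and no verification that a class sums to $\varpi_i$ applied to a single monomial (this last point is delicate even to state, since distinct diagrams in $\mcD_{s_i a}$ can share a $\beta$-weight, so classes cannot be cut out by weights alone). Moreover, the paper itself supplies evidence that the inductive step is not merely unproven but likely \emph{false} in the reduced-word-independent form you need: no K-crystal satisfying (K.2) exists on $\svt^n(\lambda)$ already for $\lambda = (2,1)$ (see \cite[Fig.~6,7]{MPS18} and \S\ref{sec:introduction}), and a string decomposition of $\mcD_{s_i a}$ compatible with every way of reaching $s_i a$ --- in particular across the braid relation $s_i s_{i+1} s_i = s_{i+1} s_i s_{i+1}$ --- would transport to exactly such a structure. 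The authors expect any such structure to depend on a commutation class of reduced words, so an induction of the shape $F_{s_i a} = \varpi_i F_a$, uniform in the reduced word, cannot be expected to close for general $a$. Accordingly, the paper does not prove Conjecture~\ref{conj:K_skyline} in general; it remains open there, and your argument, as proposed, does not change that. (Your base case and the recursion $L_{s_i a} = \varpi_i L_a$ for $a_i > a_{i+1}$ are fine, but they are the easy half.)

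For the rectangular case --- the only case the paper establishes --- the actual proof runs through a different mechanism than $\varpi_i$-strings on diagrams, and your one-sentence deferral to ``routing through the bijection'' elides all of the work. The paper proves Lemma~\ref{lemma:flag_condition}, identifying $\svt^n_w(\lambda)$ with the flagged tableaux $F(\lambda; w)$; Theorem~\ref{thm:Krystal_rectangles}, where Stembridge's full commutativity of the minimal coset representatives is what sidesteps the braid-relation obstruction you flag (so $\ch_{\beta} \svt^n_w(\lambda) = L_{w\lambda}$); and Proposition~\ref{prop:K_skyline_bijection}, where the map $\phi$ is shown to be a $\beta$-weight-preserving bijection $\mcD_{w\lambda} \to \svt^n_w(\lambda)$ by defining (K-)Kohnert moves directly on set-valued tableaux (Definition~\ref{def:Kohnert_moves_tableaux}, Lemma~\ref{lem:Kohnert_svt}), checking that $\phi$ intertwines the two kinds of moves, and proving surjectivity via an explicit row-by-row sequence of moves from the initial tableau $T_{w\lambda}$. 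Note also Remark~\ref{rem:Key_property}: $\phi$ itself fails to land in partition shapes when $\lambda$ is not a rectangle, so even the bijective scaffolding you hope to route through is unavailable for general $a$. To turn your outline into a proof of the rectangular case you would need to carry out precisely these intertwining and surjectivity arguments; to prove the general statement you would need a fundamentally new idea addressing the commutation-class dependence, which the proposal correctly identifies but does not supply.
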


\begin{ex}
Consider $\lambda = 2^2$ being a $2 \times 2$ rectangle and $w = s_1 s_2$, so that $a = (0,2,2)$. Then the set of K-Kohnert diagrams for $a$ is
\[
\newcommand{\kohnert}[1]{\begin{tikzpicture}[scale=0.5] #1 \draw[very thin] (0,0) grid (3,2); \end{tikzpicture}}
\begin{array}{c@{\qquad}c@{\qquad}c@{\qquad}c@{\qquad}c}
\kohnert{ \fill[darkred!60] (1,0) rectangle (3,2); }
&
\kohnert{ \fill[darkred!60] (2,0) rectangle (3,2); \fill[darkred!60] (1,0) rectangle (2,1); \fill[darkred!60] (0,1) rectangle (1,2); }
&
\kohnert{ \fill[darkred!60] (1,0) rectangle (3,2); \fill[darkred!60] (0,1) rectangle (1,2); \fill[black] (1.5,1.5) circle (0.2);}
&
\kohnert{ \fill[darkred!60] (0,1) rectangle (1,2); \fill[darkred!60] (1,0) rectangle (2,2); \fill[darkred!60] (2,0) rectangle (3,1); }
&
\kohnert{ \fill[darkred!60] (0,1) rectangle (3,2); \fill[darkred!60] (1,0) rectangle (3,1); \fill[black] (2.5,1.5) circle (0.2); }
\\[5pt]
\kohnert{ \fill[darkred!60] (0,0) rectangle (1,2); \fill[darkred!60] (2,0) rectangle (3,2); } &
\kohnert{ \fill[darkred!60] (0,0) rectangle (1,2); \fill[darkred!60] (2,0) rectangle (3,1); \fill[darkred!60] (1,1) rectangle (2,2); } &
\kohnert{ \fill[darkred!60] (0,0) rectangle (1,2); \fill[darkred!60] (1,1) rectangle (2,2); \fill[darkred!60] (2,0) rectangle (3,2); \fill[black] (2.5,1.5) circle (0.2); } &
\kohnert{ \fill[darkred!60] (0,0) rectangle (1,2); \fill[darkred!60] (1,0) rectangle (2,2); } &
\kohnert{ \fill[darkred!60] (0,0) rectangle (2,2); \fill[darkred!60] (2,0) rectangle (3,1); \fill[black] (2.5,0.5) circle (0.2); }
\\[5pt]
\kohnert{ \fill[darkred!60] (0,0) rectangle (1,2); \fill[darkred!60] (2,0) rectangle (3,2); \fill[darkred!60] (1,0) rectangle (2,1); \fill[black] (1.5,0.5) circle (0.2); } &
\kohnert{ \fill[darkred!60] (0,0) rectangle (2,2); \fill[darkred!60] (2,0) rectangle (3,1); \fill[black] (1.5,0.5) circle (0.2); } &
\kohnert{ \fill[darkred!60] (0,0) rectangle (3,2); \fill[black] (1.5,0.5) circle (0.2); \fill[black] (2.5,1.5) circle (0.2);} &
\end{array}
\]
Hence, Conjecture~\ref{conj:K_skyline} (correctly) predicts that 
\begin{align*}
L_{(0,2,2)}(\xx; \beta) &= \xx^{022} + \xx^{112} + \beta \xx^{122} + \xx^{121} + \beta \xx^{122} \\
& \hspace{20pt} + \xx^{202} + \xx^{211} + \beta \xx^{212} + \xx^{220} + \beta \xx^{221} \\
& \hspace{20pt} + \beta \xx^{212} + \beta \xx^{221} + \beta^2 \xx^{222}.
\end{align*}
\end{ex}

Conjecture~\ref{conj:K_skyline} specializes at $\beta=0$ to A.~Kohnert's combinatorial formula for the monomial expansion of Demazure characters~\cite{Kohnert91}.

The second conjectural combinatorial rule is from~\cite{Monical16}. We fill a skyline diagram with finite nonempty sets of positive integers that satisfy the following conditions. Call the largest entry in a box the \defn{anchor} and the other entries \defn{free}.
\begin{enumerate}[(S.1)]
\item Entries do not repeat in a row.
\item \label{weak_increase} If $B$ is below $A$, then $\min B \geq \max A$ \textit{i.e.}, the columns are weakly increasing top-to-bottom in the set-valued sense).
\item For every triple of boxes of the form
\[
\begin{array}{c@{\hspace{40pt}}c}
\begin{array}{|c|c|c|}
\cline{1-1} \cline{3-3}
A & \cdots & C
\\ \cline{1-1} \cline{3-3}
B & \multicolumn{1}{c}{}
\\ \cline{1-1}
\end{array}
&
\begin{array}{|c|c|c|}
\cline{3-3}
\multicolumn{1}{c}{} & & A
\\ \cline{1-1} \cline{3-3}
C & \cdots & B
\\ \cline{1-1} \cline{3-3}
\end{array}
\\[10pt]
\text{left column weakly taller}
&
\text{right column strictly taller}
\end{array}
\]
the anchors $a,b,c$ of $A,B,C$, respectively, must satisfy either $c < a$ or $b < c$.\footnote{Such triples were originally called \defn{inversion triples} and required to satisfy $c < a \leq b$ or $a \leq b < c$, but in our case $a \leq b$ is immediate by~\ref{weak_increase}.}
\item Every free entry is in the leftmost cell of its row such that the entry remains free and~\ref{weak_increase} is not violated.
\item Anchors in the bottom row equal their column index.
\end{enumerate}
We call such a tableau a \defn{(semistandard) set-valued skyline tableau}.
For $a$ a weak composition (\textit{e.g.}, a finite string of nonnegative integers), let $\skyline_a$ denote the set of set-valued skyline tableaux with shape $a$.
We define the weight, excess, and $\beta$-weight for a set-valued skyline tableau in the same way as for a set-valued tableau.

Let $\overline{\varpi}_i = \varpi_i - 1$.
Define the \defn{Lascoux atom} to be
\[
\overline{L}_{w\lambda}(\xx; \beta) := \overline{\varpi}_w \xx^{\lambda}.
\]

\begin{conj}[{\cite[Conj.~5.2]{Monical16}}]
\label{conj:skyline_tableaux}
We have
\[
\overline{L}_{w\lambda} = \sum_{S \in \skyline_{w\lambda}} \wt_{\beta}(S).
\]
\end{conj}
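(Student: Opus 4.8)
The plan is to deduce the atom formula from the (unrefined) Grothendieck expansion~\eqref{eq:Grothendieck_defn} together with an operator-theoretic decomposition of Lascoux polynomials into atoms, and then to match the two sides by a single content-preserving bijection.

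First I would record the algebraic skeleton. A direct computation using $\pi_i(x_{i+1}) = 0$ shows that $\varpi_i$ is idempotent, $\varpi_i^2 = \varpi_i$, exactly as for $\pi_i$; consequently $\overline{\varpi}_i^2 = -\overline{\varpi}_i$, and since the $\varpi_i$ obey the braid relations so do the atom operators $\overline{\varpi}_i$. Expanding $\varpi_w = \prod_k(1 + \overline{\varpi}_{i_k})$ along a reduced word and collapsing non-reduced subwords via $\overline{\varpi}_i^2 = -\overline{\varpi}_i$ then yields the familiar Bruhat decomposition $\varpi_w = \sum_{v \le w}\overline{\varpi}_v$. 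Applying this to $\xx^{\lambda}$ and discarding the terms with $v \notin \sym_n^{\lambda}$ (which vanish because $\overline{\varpi}_i \xx^{\lambda}=0$ when $\lambda_i=\lambda_{i+1}$) gives
\[
L_{w\lambda} \;=\; \sum_{\substack{v \le w \\ v \in \sym_n^{\lambda}}} \overline{L}_{v\lambda}.
\]
Taking $w=w_0$ rewrites the Grothendieck formula as $\sum_{T\in\svt^n(\lambda)}\wt_{\beta}(T) = \sum_{v\in\sym_n^{\lambda}}\overline{L}_{v\lambda}$.

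Second, the combinatorial heart is a weight- and excess-preserving bijection
\[
\Phi\colon \svt^n(\lambda) \;\xrightarrow{\ \sim\ }\; \bigsqcup_{v \in \sym_n^{\lambda}} \skyline_{v\lambda},
\]
the K-theoretic lift of Mason's bijection between semistandard Young tableaux and semistandard skyline fillings. Reading each set-valued tableau column-by-column, $\Phi$ should insert entries into skyline columns so that the anchors reconstruct a genuine skyline shape $v\lambda$ while the surplus entries (those contributing to $\excess$) become free entries: conditions (S.1)--(S.2) encode the semistandard and set-valued column conditions, (S.5) records the bottom-row normalization forced by the shape, and the inversion-triple condition (S.3) together with the leftmost-placement rule (S.4) are exactly what make $\Phi$ invertible. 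Granting $\Phi$, the $w_0$ case is immediate. To isolate a single atom $\overline{L}_{w\lambda}$, one intersects $\Phi$ with the filtration of $\svt^n(\lambda)$ by K-Demazure subcrystals: the subset mapping into $\bigsqcup_{v\le w}\skyline_{v\lambda}$ must coincide with the K-Demazure crystal whose character is $L_{w\lambda}$, and then induction on $\ell(w)$, peeling off the top block $v=w$, delivers $\overline{L}_{w\lambda}=\sum_{S\in\skyline_{w\lambda}}\wt_{\beta}(S)$.

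The hard part will be twofold, and is precisely why only partial cases are currently within reach. First, constructing and inverting $\Phi$ in the set-valued setting is delicate: the free entries created by excess must be threaded back into columns without violating (S.2), and the inversion-triple condition (S.3) has no counterpart that is automatically preserved by naive insertion, so bijectivity requires a careful local analysis of how adding a surplus entry interacts with neighboring columns. Second, and more seriously, aligning $\Phi$ with the Bruhat filtration presupposes that the $v\le w$ portion of $\svt^n(\lambda)$ is cut out by a K-Demazure crystal computing $L_{w\lambda}$, and no such crystal is known in general (this is exactly the phenomenon discussed in the introduction, where the K-crystal exists unconditionally only for rectangular $\lambda$). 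Thus the cleanest route to the full conjecture is to first establish the K-crystal/Lascoux-polynomial formula for the shape in question and then transport it through $\Phi$; absent that input one must instead verify directly that $\sum_{v\le w}\sum_{S\in\skyline_{v\lambda}}\wt_{\beta}(S)$ satisfies the Demazure--Lascoux recursion under $\varpi_i$, whose $\beta$-term $\beta\,\pi_i(x_{i+1}\cdot(-))$ --- creating a new free entry --- is the genuinely new and most technical point.
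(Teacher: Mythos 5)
Your algebraic skeleton is sound and matches the paper's setup: the idempotence $\varpi_i^2 = \varpi_i$, the relation $\overline{\varpi}_i^2 = -\overline{\varpi}_i$, and the expansion $\varpi_w = \sum_{v \leq w} \overline{\varpi}_v$ are exactly Equation~\eqref{eq:lascoux_to_atoms} (Monical's Thm.~5.1, which the paper simply cites), and your ``peel off the top block $v=w$'' step is the paper's inclusion--exclusion reduction~\eqref{eq:equivalent_conjecture}, which reformulates the conjecture as $\overline{L}_{w\lambda} = \ch_{\beta}\bigl(\overline{\svt}^n_w(\lambda)\bigr)$. Your diagnosis of the obstruction is also accurate: the paper proves the statement only for rectangular $\lambda$, precisely because the K-crystal computing $L_{w\lambda}$ (Theorem~\ref{thm:Krystal_rectangles}) exists only there, and your proposed ``cleanest route'' --- establish the K-crystal, then transport through a bijection --- is the paper's actual strategy.

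The genuine gap is the bijection $\Phi$: you postulate a Mason-style K-theoretic insertion from $\svt^n(\lambda)$ to $\bigsqcup_{v \in \sym_n^{\lambda}} \skyline_{v\lambda}$ but never construct it, and this is not a routine verification --- the paper itself explicitly declines to build such an insertion, relegating it to a closing remark in Section~\ref{sec:Key_tableaux} as something the authors merely ``believe'' exists. What the paper does instead (Proposition~\ref{prop:skyline_tableaux_bijection}) is much more elementary and avoids insertion entirely: it constructs a per-atom map $\psi \colon \skyline_{w\lambda} \to \overline{\svt}^n_w(\lambda)$ by sorting the anchors in each row, re-placing each free entry in the leftmost admissible box, and transposing rows into columns; injectivity and semistandardness come for free because $\psi$ is a restriction of Monical's own bijection $\hat{\rho}$ to reverse set-valued tableaux, composed with the row/column reversal --- a step that is a bijection onto $\svt^n(\lambda)$ \emph{only} because a rectangle reversed is again a partition shape (cf.\ Remark~\ref{rem:Key_property} for how this fails otherwise). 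Surjectivity onto the atom then uses the concrete flagging characterization from Lemma~\ref{lemma:flag_condition}: $T \in \overline{\svt}^n_w(\lambda)$ iff the largest entry of the rightmost box in row $k$ is exactly $i_k + 1$, a characterization your proposal lacks and would need in order to say what the fibers of your Bruhat filtration actually are. So while your plan identifies the correct architecture, its combinatorial heart is missing, and in the rectangular case the paper fills that hole with a different (sorting, not inserting) device that exploits rectangularity in an essential way.
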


From~\cite[Thm.~5.1]{Monical16}, we have
\begin{equation}
\label{eq:lascoux_to_atoms}
L_{w\lambda}(\xx; \beta) = \sum_{v \leq w} \overline{L}_{v \lambda}(\xx; \beta),
\end{equation}
where the sum is taken over all permutations $v$ less than or equal to $w$ in Bruhat order.
Thus, if Conjecture~\ref{conj:skyline_tableaux} holds, then we have another combinatorial interpretation of the Lascoux polynomial as
\[
L_{w\lambda}(\xx; \beta) = \sum_{v \leq w} \sum_{S \in \skyline_{v\lambda}} \wt_{\beta}(S).
\]
By~\cite{MPS18II}, this interpretation of Lascoux polynomials is equivalent to~\cite[Conj.~5.3]{Monical16} .

\subsection{K-crystals}

We recall a proposed K-theory analog of crystals that was introduced in~\cite{MPS18}. For a nilpotent operator $\psi$, we write $\psi^{\max}(x)$ to mean $\psi^m(x)$, where $m = \max \{ h  \in \mathbb{Z}_{>0} \mid \psi^h(x) \neq 0 \}$.

We call a connected $U_q(\fsl_n)$-crystal $B$ a \defn{connected K-crystal} if it is enhanced with \defn{K-crystal operators}, $e_i^K, f_i^K \colon B \to B \sqcup \{0\}$ that satisfy the following properties:
\begin{enumerate}[(K.1)]
\item \label{Krystal:connected} The set $B$ is generated by a unique element $u \in B$ that satisfies $e_i u = 0$ and $e_i^K u = 0$ for all $i \in I$. That is to say, we can reach every element in $B$ by applying a sequence of (K-)crystal operators from $u$. The element $u$ is called the \defn{minimal highest weight element}.
\item \label{Krystal:demazure} Let $w = s_{i_1} \cdots s_{i_{\ell}} \in \sym_n$ be a reduced expression. The \defn{K-Demazure crystal}
  \[
  B_w := \left\{ b \in B \mid (e_{i_{\ell}}^K)^{\max} e_{i_{\ell}}^{\max} \cdots (e_{i_1}^K)^{\max} e_{i_1}^{\max} b = u \right\}
  \]
  does not depend on the choice of reduced expression of $w$. Moreover, we have $B_{w_0} = B$.
\item \label{Krystal:character} Let $\lambda = \wt(u)$ be the weight of the minimal highest weight element from~\ref{Krystal:connected}. The \defn{$\beta$-character} of $B_w$
\[
\ch_{\beta}(B_w) := \sum_{b \in B_w} \beta^{|\wt(b)| - |\lambda|} \wt(b)
\]
is equal to the Lascoux polynomial $L_{w\lambda}(\xx; \beta)$.
\end{enumerate}
A \defn{K-crystal} is just a disjoint union of connected K-crystals. It is also strongly desirable for these operators to also satisfy $e_i^K b = b'$ if and only if $b = f_i^K b'$ for all $b, b' \in B$ and $\wt(f_i^K b) = x_{i+1} \wt(b)$.

\begin{remark}
This definition of a K-crystal is slightly more general than that given in~\cite{MPS18}, which was directly based on the combinatorics of set-valued tableaux. In particular, the extra condition that $B_{w_0} = B$ in~\ref{Krystal:demazure} is implicit in the K-crystal definition given in~\cite{MPS18} as $\ch_{\beta}(B_{w_0}) = L_{w_0 \lambda}(\xx; \beta) = \ch_{\beta}(B)$ by Equation~\eqref{eq:Grothendieck_defn}.
\end{remark}

In~\cite{MPS18}, such K-crystals were constructed for the cases that $\lambda$ is a single-row~\cite[Thm.~7.5]{MPS18} or single-column~\cite[Thm.~7.9]{MPS18}. 

\begin{prob}[{\cite[Open Prob.~7.1]{MPS18}}]
\label{prob:Krystal}
Construct an appropriate K-theory analog of crystals for general $\lambda$.
\end{prob}

It was shown in~\cite{MPS18} that there is no K-crystal (as defined here) solving Open Problem~\ref{prob:Krystal}. Rather, in general, we believe that~\ref{Krystal:demazure} should be relaxed, so that the K-crystal operators can depend on a choice of reduced word, giving a structure that we coined a weak K-crystal. Our main result is to prove~\cite[Conj.~7.12]{MPS18}, thus giving an answer to Open Problem~\ref{prob:Krystal} for rectangular shapes, extending the single row and column results of~\cite{MPS18}. In this rectangular case, the relevant Weyl group elements $\sym_n^{\lambda}$ are all fully-commutative, so the expected dependence on reduced word does not appear.

\section{K-crystals for rectangular shapes}
\label{sec:K_rectangles}

In this section, we prove our main result: when $\lambda$ is a rectangle, then $\svt^n(\lambda)$ has a K-crystal structure. Thereby, we establish~\cite[Conj.~7.12]{MPS18}, providing a solution to Open Problem~\ref{prob:Krystal} for rectangular shapes.

Our construction of the K-crystal operators is motivated by the heuristics given in~\cite{MPS18}, which come from the following K-theory analog of the decomposition of a crystal into $i$-strings \textit{i.e.}, restricting to the action of $e_i$ and $f_i$ for a fixed $i \in I$) based on the definition of the Demazure--Lusztig operators. Indeed, by considering only the action of a fixed $i \in I$, we predicted in~\cite{MPS18} that the K-crystal should decompose into (maximal) subcrystals of the form
\[
\begin{tikzpicture}[xscale=2,yscale=-1.5]
\node (top) at (0,0)  {$b$};
\node (f) at (1,0)  {$\bullet$};
\node (ff) at (2,0)  {$\bullet$};
\node (dots) at (3,0) {$\cdots$};
\node (fend) at (4,0)  {$\bullet$};
\node (ffend) at (5,0)  {$\bullet$};
\node (K) at (0,1)  {$\bullet$};
\node (fK) at (1,1)  {$\bullet$};
\node (ffK) at (2,1)  {$\bullet$};
\node (dotsK) at (3,1) {$\cdots$};
\node (fendK) at (4,1)  {$\bullet$};
\draw[->,blue] (top) -- node[midway,above] {$i$} (f);
\draw[->,blue] (f) -- node[midway,above] {$i$} (ff);
\draw[->,blue] (ff) -- node[midway,above] {$i$} (dots);
\draw[->,blue] (dots) -- node[midway,above] {$i$} (fend);
\draw[->,blue] (fend) -- node[midway,above] {$i$} (ffend);
\draw[->,blue] (K) -- node[midway,above] {$i$} (fK);
\draw[->,blue] (fK) -- node[midway,above] {$i$} (ffK);
\draw[->,blue] (ffK) -- node[midway,above] {$i$} (dotsK);
\draw[->,blue] (dotsK) -- node[midway,above] {$i$} (fendK);
\draw[->,blue,dashed] (top) -- node[midway,left] {$i$} (K);
\end{tikzpicture}
\]
where the solid (resp.~dashed) arrow represents the $f_i$ (resp.~$f_i^K$) action and the top $i$-string has length one more than the bottom $i$-string.
Such a subcrystal was coined an \defn{$i$-K-string} in~\cite{MPS18}. We say an $i$-K-string has \defn{length}
\[
\ell := \max \{k \mid f_i^k b \neq 0\}.
\]
Note that $f_i^{\ell-1} f_i^K b \neq 0$ and $f_i^{\ell} f_i^K b = 0$. It is easy to see that for $b$ such that $e_i b = 0$ (which implies for $\wt(b) = x_1^{a_1} \dotsm x_n^{a_n}$ we have $a_i \geq a_{i+1}$), the $\beta$-character of the $i$-K-string starting at $b$ equals $\varpi_i \wt(b)$.

For the remainder of this section, we consider $\lambda = s^r$ to be an $r \times s$ rectangle.

The following lemma is straightforward after recalling that
\begin{align*}
\Stab_n(\lambda) & = \sym_r \times \sym_{n-r}, \text{and}
\\
\sym_n^{\lambda} & = \{ w \in \sym_n \mid w(1) < \cdots < w(r) \text{ and } w(r+1) < \cdots < w(n) \}.
\end{align*}

\begin{lemma}
\label{lemma:coset_repr}
Let $\lambda = s^r$ be an $r \times s$ rectangle.
For any $w \in \sym_n^{\lambda}$, there exists a reduced expression of $w$ of the form
\[
(s_{i_k} \dotsm s_{r-k+1} s_{r-k}) \dotsm (s_{i_1} \dotsm s_{r} s_{r-1}) (s_{i_0} \dotsm s_{r+1} s_r)
\]
for some $-1 \leq k < r$ and $1 \leq i_k < \cdots < i_1 < i_0 \leq n$. (The case $k=-1$ means there are no terms in the product, so $w = 1$.)
\end{lemma}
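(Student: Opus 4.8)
The plan is to make $\sym_n^{\lambda}$ completely explicit and then read the desired word off the combinatorics of Grassmannian permutations. Writing $\lambda = s^r$ as the length-$n$ word $(s^r, 0^{n-r})$, a permutation fixes it exactly when it preserves the two level sets $\{1,\dots,r\}$ and $\{r+1,\dots,n\}$, so $\Stab_n(\lambda) = \sym_{\{1,\dots,r\}} \times \sym_{\{r+1,\dots,n\}}$ is the maximal parabolic subgroup generated by $\{s_i : i \neq r\}$. By the standard characterization of minimal-length representatives of a parabolic quotient, $\sym_n^{\lambda}$ is exactly the set of Grassmannian permutations with (at most) one descent, at position $r$: those $w$ with $w(1) < \cdots < w(r)$ and $w(r+1) < \cdots < w(n)$. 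Such $w$ are in length-preserving bijection with partitions $\mu$ inside the $r \times (n-r)$ box, via $\mu_a := w(r-a+1) - (r-a+1)$, and $\ell(w) = |\mu|$ since all inversions of $w$ run between the two blocks.

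The heart of the argument is to extract the word from $\mu$. I would label the cell in row $a$ (from the top) and column $b$ (from the left) of $\mu$ by $s_{r-a+b}$, and read the nonempty rows from bottom to top, each from right to left. If $\mu$ has $m \ge 1$ nonzero rows, put $k := m-1$ and $i_j := r - j - 1 + \mu_{j+1}$ for $0 \le j \le k$; then row $j+1$ contributes the decreasing run $s_{i_j} s_{i_j-1} \cdots s_{r-j}$, and reading bottom-to-top yields exactly the candidate expression
\[
(s_{i_k} \cdots s_{r-k}) \cdots (s_{i_1} \cdots s_{r-1})(s_{i_0} \cdots s_r).
\]
Since each read row is nonempty, $\mu_{j+1} \ge 1$, so $i_j \ge r-j$ and every factor is a genuine decreasing run; since $\mu$ is a partition, $i_{j-1} - i_j = 1 + \mu_j - \mu_{j+1} \ge 1$, giving the required strict chain $i_k < \cdots < i_0$; and $\mu_1 \le n-r$ forces $i_0 = r-1+\mu_1 \le n-1$. (When $\mu = \emptyset$ we have $w = e$, the empty product.)

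It then remains to show this word equals $w$ and is reduced, which I would do by induction on $|\mu|$, peeling off the leftmost factor. Deleting the bottom nonzero row of $\mu$ produces a partition $\mu'$ whose associated Grassmannian permutation $w'$ has, by the same recipe, the word consisting of the remaining $k$ factors $(s_{i_{k-1}} \cdots s_{r-k+1}) \cdots (s_{i_0} \cdots s_r)$. One checks that $w = (s_{i_k} s_{i_k-1} \cdots s_{r-k})\, w'$: since left multiplication by $s_c$ transposes the values $c$ and $c+1$, applying this run to $w'$ (rightmost letter first) promotes the value $r-k$ up to $i_k+1$ and implements precisely the insertion of the new row, creating exactly $\mu_m = i_k - (r-k) + 1$ new inversions. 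Thus $\ell(w) = \mu_m + \ell(w')$, so the lengths add and, by induction, the full word has $|\mu|$ letters. As $|\mu| = \ell(w)$, any word for $w$ of length $\ell(w)$ is automatically reduced, and we are done.

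The step I expect to be the main obstacle is the verification underlying the induction, namely that the decreasing run $s_{i_k} \cdots s_{r-k}$ acting on $w'$ reproduces the one-line notation of $w$ and contributes exactly $\mu_m$ to the length rather than cancelling against inversions already present in $w'$. This is a careful but routine computation tracking the values $r-k, r-k+1, \dots, i_k+1$ through the block structure; once in place, reducedness is immediate from the length count, with no need to invoke the deletion or braid conditions. The boundary cases (the empty partition, and the precise bound on $i_0$) are then trivial to dispatch.
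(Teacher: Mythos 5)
The paper gives no proof of this lemma to compare against: it is prefaced only by the sentence ``The following lemma is straightforward,'' and the verification is left entirely to the reader. Your argument is correct and fills that gap, using what is surely the intended route. Since $\lambda = s^r$ has exactly two distinct entry values, $\Stab_n(\lambda)$ is the maximal parabolic generated by $\{s_i : i \neq r\}$, so $\sym_n^{\lambda}$ is the set of Grassmannian permutations with unique descent at $r$; these biject length-preservingly with partitions $\mu$ inside the $r \times (n-r)$ box, and your filling of box $(a,b)$ with $s_{r-a+b}$, read bottom-to-top and right-to-left, is exactly the displayed word. I checked the bookkeeping: $i_j = r-j-1+\mu_{j+1}$ gives $i_{j-1}-i_j = 1+\mu_j-\mu_{j+1} \geq 1$ (the strict chain), nonemptiness of each factor comes from $\mu_{j+1}\geq 1$, and $i_0 = r-1+\mu_1 \leq n-1$. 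The key step you flag --- that the decreasing run $s_{i_k}\dotsm s_{r-k}$, as a left multiplier, sends the value $r-k$ to $i_k+1$ while shifting the interval of values $[r-k+1,\, i_k+1]$ down by one --- does hold, and since that map is order-preserving away from $r-k$, the product agrees with $w$ on positions $1,\dotsc,r$ and keeps positions $r+1,\dotsc,n$ increasing, hence equals $w$; your trick of then deducing reducedness purely from the count $\ell(w)=\lvert\mu\rvert$ avoids any exchange-condition argument and is clean. Two small points are quirks of the lemma's statement rather than of your proof: the identity permutation only fits the displayed form via the empty-product convention (as you note), and the stated bound $i_0 \leq n$ should really be $i_0 \leq n-1$, which your construction produces automatically.
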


\begin{ex}
	To clarify our notation and symmetric group conventions, consider $w = 2357146$, so $n=7$ and $r = 4$. Then the reduced expression for $w$ in the form given by Lemma~\ref{lemma:coset_repr} is
	\[
	w = (s_1) (s_2) (s_4s_3)(s_6s_5s_4).
	\] In particular, we have $k=3$, $i_0 = 6$, $i_1 = 4$, $i_2 = 2$, and $i_3=i_k=1$.
\end{ex}

\begin{dfn}
\label{def:K_crystal_ops}
Let $T \in \svt^n(\lambda)$, and fix some $i \in I$. We say a box $\bbb$ of $T$ is \defn{bridged} (for $i$) if there exists an $i$ strictly to the right of $\bbb$ paired with an $i+1$ strictly to the left of $\bbb$. Otherwise $\bbb$ is \defn{unbridged}.
\begin{description}
\item[\defn{$f_i^K$}] If $i \notin T$ or $f_i T = 0$ or $e_i T \neq 0$, then $f_i^K T = 0$. Otherwise, let $\bbb$ be the rightmost box that contains an $i$ corresponding to an uncanceled $\bplus$.
If $i$ and $i+1$ are both in an unbridged box to the right of $\bbb$, then $f_i^K T = 0$.
Otherwise, define $f_i^K T$ by adding an $i+1$ to $\bbb$.
\item[\defn{$e_i^K$}] If there does not exist an unbridged box with both an $i$ and $i+1$ or $e_i T \neq 0$, then $e_i^K T = 0$. Otherwise, let $\bbb$ be the rightmost unbridged box that contains both an $i$ and $i+1$. If there exists an $i$ to the right of $\bbb$ corresponding to an uncanceled $\bplus$, then $e_i^K T = 0$. Otherwise, define $e_i^K T$ by removing the $i+1$ from $\bbb$.
\end{description}
\end{dfn}

For examples of these operators, see Figure~\ref{fig:Krystal_ex_22}; additional examples may be found in~\cite{MPS18}. It is clear that if $f_i^K T \neq 0$ (resp.~$e_i^K T \neq 0$), then $f_i^K T \in \svt^n(\lambda)$ (resp.~$e_i^K T \in \svt^n(\lambda)$). We give an example to illustrate the bridging condition.

\begin{ex}
We apply the K-crystal operator $e_4^K$, which acts on the left box containing $\{{\color{blue}\mathbf{4},\mathbf{5}}\}$, which is unbridged, as the right box containing $\{{\color{darkred}4,5}\}$ is bridged:
\[
\ytableausetup{boxsize=1.5em}
\ytableaushort{111114,2222{\color{darkred}4,\!5}9,33359{10},44{\color{blue}\mathbf{4},\!\mathbf{5}}9{10}{11},579{10}{11}{12}}
\quad
\xrightarrow[\hspace{30pt}]{e^K_4}
\quad
\ytableaushort{111114,2222{\color{darkred}4,\!5}9,33359{10},44{\color{blue}\mathbf{4}}9{10}{11},579{10}{11}{12}}\,.
\]
\end{ex}

\begin{lemma}
\label{lemma:inverse_operations}
Let $T, T' \in \svt^n(\lambda)$.
We have $e_i^K T' = T$ if and only if $T' = f_i^K T$.
\end{lemma}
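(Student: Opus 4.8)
The plan is to prove the biconditional by showing that $e_i^K$ and $f_i^K$ are mutually inverse partial bijections. The essential observation is that, unlike $f_i$, the operator $f_i^K$ never moves an entry between boxes: it simply \emph{adds} an $i+1$ to the selected box $\bbb$, and $e_i^K$ simply \emph{deletes} one. So the whole statement reduces to two claims: (a) both operators act on the \emph{same} box $\bbb$, and (b) their guard conditions (the cases returning $0$) are exactly complementary. The engine throughout will be a careful accounting of how the $i$-signature (the cancelled $\bplus/\bminus$ string) of $T$ changes when a single column is toggled between a $\bplus$-column and a column containing both $i$ and $i+1$; since only one box changes, only one column's sign changes.

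For the implication $T' = f_i^K T \neq 0 \implies e_i^K T' = T$, I would start from $e_i T = 0$, so the cancelled signature of $T$ is $\bplus^a$, occurring in columns $d_1 < \cdots < d_a$, and $\bbb$ is the unique $i$-box in the rightmost such column $d_a$. Adding an $i+1$ to $\bbb$ turns column $d_a$ from a $\bplus$-column into a column containing both $i$ and $i+1$; the non-vanishing guard for $f_i^K$ says no box weakly right of $\bbb$ already contained both $i$ and $i+1$, so $\bbb$ becomes the rightmost such box in $T'$, which is precisely the box $e_i^K$ selects. Because the $\bplus$ at $d_a$ was \emph{unpaired}, deleting it from the signature cannot expose a previously paired $\bminus$, so $e_i T' = 0$; and since $\bbb$ still contains its $i$, the pairing relevant to the $e_i^K$ guard is that of $T$ itself, in which $d_a$ is the rightmost unpaired $\bplus$, so nothing unpaired lies strictly to its right. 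Hence $e_i^K T' \neq 0$ and deletes the $i+1$ from $\bbb$, recovering $T$.

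For the converse $T = e_i^K T' \neq 0 \implies f_i^K T = T'$, let $\bbb$, in column $p$, be the rightmost box of $T'$ containing both $i$ and $i+1$, with $e_i T' = 0$ and no unpaired $\bplus$ strictly right of $\bbb$. Deleting the $i+1$ turns column $p$ into a $\bplus$-column; as every $\bminus$ of $T'$ was already paired, inserting this extra $\bplus$ produces no unpaired $\bminus$, whence $e_i T = 0$ and $i \in T$, while the new $\bplus$ at $p$ is itself unpaired and — by the hypothesis on $T'$ — is the rightmost unpaired $\bplus$ of $T$. Thus $f_i^K$ selects the $i$-box of column $p$, namely $\bbb$; and since $\bbb$ was the rightmost box of $T'$ containing both $i$ and $i+1$, no box weakly right of $\bbb$ in $T$ contains both, so the $f_i^K$ guard is satisfied and $f_i^K T$ re-adds the $i+1$ to $\bbb$, returning $T'$. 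Throughout I may invoke the remark already recorded in the text that nonzero images of these operators lie in $\svt^n(\lambda)$, so I need not re-verify semistandardness.

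The main obstacle is the signature bookkeeping underlying both directions: I must show that toggling a single column between ``$\bplus$'' and ``both'' never unpairs an already-paired $\bminus$ (so that $e_i = 0$ is preserved across the operation) and never shifts the rightmost unpaired $\bplus$ off the toggled column. Concretely, this is where the guard ``no unpaired $\bplus$ strictly right of $\bbb$'' must be read against the signature of the deleted tableau $T$ (in which $\bbb$'s column contributes its $\bplus$), and where I would rely on the standard stability of the $\bminus\bplus$-matching under insertion or deletion of an \emph{unpaired} sign. Establishing that the two guards are genuinely complementary is the crux, and it is here that the semistandard (and rectangular) structure is used, since it controls how the $i$'s and $i+1$'s can interleave across columns and thereby pins down the relative position of the rightmost unpaired $\bplus$ and the rightmost box containing both $i$ and $i+1$.
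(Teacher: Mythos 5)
Your overall plan is the same as the paper's: prove the two implications separately by tracking how the column signature changes when one column toggles between carrying a $\bplus$ and containing both $i$ and $i+1$, and check that both operators select the same box with complementary guards. Your first direction ($T' = f_i^K T \Rightarrow e_i^K T' = T$) is sound: deleting an \emph{unpaired} $\bplus$ from a signature with no unpaired $\bminus$ leaves the matching of all remaining signs intact, which is all that direction needs, and it matches the paper's argument.

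The genuine gap is in your converse direction, at exactly the step you label ``the crux'' and then defer. You assert that the $\bplus$ restored in column $p$ is unpaired in $T$ and is the rightmost unpaired $\bplus$, ``by the hypothesis on $T'$.'' But that hypothesis only constrains the \emph{unpaired} $\bplus$'s of $T'$; it says nothing about a $\bplus$ strictly right of $\bbb$ that is paired in $T'$ with a $\bminus$ lying strictly \emph{left} of $\bbb$. Insertion is not stable the way deletion is: such a straddling $\bminus$ re-pairs with the new $\bplus$ at $p$, exposing its old partner — strictly right of $p$ — as the rightmost unpaired $\bplus$ of $T$, so $f_i^K$ acts on the wrong column. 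Semistandardness and rectangularity do \emph{not} exclude this. Take $\lambda = 3^3$, $n=6$, $i=3$, and
\[
\ytableausetup{boxsize=1.7em}
T' = \ytableaushort{123,2{3,\!4}5,456}
\qquad\text{and}\qquad
T = \ytableaushort{123,235,456}\,.
\]
In $T'$ the three columns carry signs $\bminus,\ \varnothing,\ \bplus$, which cancel, so $e_3 T' = 0$; the rightmost box containing both $3$ and $4$ is the middle box $\bbb$, and the unique $\bplus$ (column $3$) is \emph{paired}, so the guard in the definition of $e_3^K$, read against the signature of $T'$ as you read it, passes, and $e_3^K T' = T$. But in $T$ the signs are $\bminus,\bplus,\bplus$; the $\bminus$ cancels the middle $\bplus$, the rightmost unpaired $\bplus$ is column $3$, and $f_3^K T$ adds a $4$ to the box containing $3$ in column $3$, not to $\bbb$. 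Hence $e_3^K T' = T$ while $f_3^K T \neq T'$: the complementarity of the two guards, under this reading, is simply false, and no appeal to ``stability of the matching'' or to the interleaving of $i$'s and $i+1$'s can establish it. The only way to close the argument is to interpret the $e_i^K$ guard against the signature of the \emph{deleted} tableau $T$ — i.e., to demand that after removal the column of $\bbb$ carries the rightmost unpaired $\bplus$ — which is what the paper's own proof silently assumes when it asserts that the removal ``creates an unpaired $\bplus$ for $\bbb$'' which ``is the rightmost'' (an assertion it never justifies, and which fails above under the other reading). With that reinterpretation your first direction goes through verbatim, and the converse reduces to showing that if no unpaired $\bplus$ of $T$ lies strictly right of $p$ then the restored $\bplus$ at $p$ is itself unpaired — the straddling-pair analysis sketched here, which is absent from your proposal. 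So you have correctly isolated the crux, but the proposal neither proves it nor, as the guard is read there, could prove it.
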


\begin{proof}
We first show $e_i^K T' = T$ implies $T' = f_i^K T$. From our assumption, we have $e_i T' = 0$ and for the rightmost unbridged box $\bbb$ with $i, i+1 \in \bbb$ (note $\bbb$ exists by our assumption), there does not exist an $i$ to the right of $\bbb$ that is either an uncanceled $\bplus$ or one paired with an $i+1$ to the left of $\bbb$. Therefore, when we remove the $i+1$ from $\bbb$ to obtain $T$, we create an uncanceled $\bplus$ for $\bbb$. So $e_i T = 0$ and $f_i T \neq 0$. Furthermore, this added uncanceled $\bplus$ is the rightmost such uncanceled $\bplus$ and there are no other unbridged boxes to the right of $\bbb$ that contain $i,i+1$ in $T$. Hence the action of $f_i^K$ on $T$ adds $i+1$ to $\bbb$, and thus $T' = f_i^K T$.

Now we show $T' = f_i^K T$ implies $e_i^K T' = T$. From our assumption, we have $e_i T = 0$ and there does not exist an unbridged box of $T$ containing both $i$ and $i+1$ to the right of the rightmost box $\bbb$ corresponding to a uncanceled $\bplus$. 
Since the $\bplus$ in $\bbb$ was uncanceled, removing it to form $T'$ does not affect any other cancelations. Hence, it follows that $e_i T' = 0$ as well since $e_i T = 0$.
The effect of adding an $i+1$ to $\bbb$ cancels the $\bplus$ corresponding to $\bbb$. Since $\bbb$ corresponded to the rightmost uncanceled $\bplus$, there is no uncanceled $\bplus$ to the right of $\bbb$ in $T$. Moreover, there is not an $i$ to the right of $\bbb$ that pairs with an $i+1$ to the left of $\bbb$ as otherwise such an $i+1$ would pair with the $i$ in $\bbb$. Hence $e_i^K$ acts on $T'$ by removing $i+1$ from $\bbb$, and thus $e_i^K T' = T$.
\end{proof}

\begin{figure}
\[
\ytableausetup{boxsize=2.0em}
\begin{tikzpicture}[>=latex,scale=2,every node/.style={scale=0.7},baseline=0]
\node (m) at (0,0) {$\ytableaushort[*(lightgray)]{11,22}$};
\node (f2m) at (0,-1) {$\ytableaushort[*(lightgray)]{11,23}$};
\node (f22m) at (1,-2) {$\ytableaushort[*(lightgray)]{11,33}$};
\node (f12m) at (-1,-2) {$\ytableaushort{12,23}$};
\node (f122m) at (0,-3) {$\ytableaushort{12,33}$};
\node (f1122m) at (0,-4) {$\ytableaushort{22,33}$};
\draw[->,red] (m) -- node[midway,right] {\small $2$} (f2m);
\draw[->,red] (f2m) -- node[midway,above right] {\small $2$} (f22m);
\draw[->,blue] (f2m) -- node[midway, above left] {\small $1$} (f12m);
\draw[->,red] (f12m) -- node[midway,below left] {\small $2$} (f122m);
\draw[->,blue] (f22m) -- node[midway, below right] {\small $1$} (f122m);
\draw[->,blue] (f122m) -- node[midway, right] {\small $1$} (f1122m);
\node (t) at (-2,0) {$\ytableaushort[*(lightgray)]{11,2{2,\!3}}$};
\node (f2t) at (-2,-1) {$\ytableaushort[*(lightgray)]{11,{2,\!3}3}$};
\node (f12t) at (-2,-2) {$\ytableaushort{12,{2,\!3}3}$};
\draw[->,red] (t) -- node[midway,right] {\small $2$} (f2t);
\draw[->,blue] (f2t) -- node[midway, right] {\small $1$} (f12t);
\node (b) at (2,-1) {$\ytableaushort{1{1,\!2},23}$};
\node (f2b) at (2,-2) {$\ytableaushort{1{1,\!2},33}$};
\node (f12b) at (2,-3) {$\ytableaushort{{1,\!2}2,33}$};
\draw[->,red] (b) -- node[midway,right] {\small $2$} (f2b);
\draw[->,blue] (f2b) -- node[midway, right] {\small $1$} (f12b);
\node (all) at (3,-1) {$\ytableaushort{1{1,\!2},{2,\!3}3}$};
\draw[->,dashed,red] (m) -- node[midway,above] {\small $2$} (t);
\draw[->,dashed,blue] (f2m) -- node[midway,above] {\small $1$} (b);
\draw[->,dashed,blue] (f22m) -- node[midway,above] {\small $1$} (f2b);
\draw[->,dashed,red] (f12m) -- node[midway,above] {\small $2$} (f12t);
\draw[->,dashed,red] (b) -- node[midway,above] {\small $2$} (all);
\draw[->,dashed,blue] (f2t) .. controls (-0.5,-0.2) and (2.2,-0.2) .. node[midway,above] {\small $1$} (all);
\end{tikzpicture}
\]
\ytableausetup{boxsize=0.8em}
\caption{The K-crystal for $\svt^3\left(\ydiagram{2,2} \right)$ with the K-crystal operators depicted by dashed lines. The K-Demazure crystal $B_{s_2}$ is the restriction to the subset of shaded tableaux.}
\label{fig:Krystal_ex_22}
\end{figure}

\begin{lemma}
\label{lemma:K_strings}
Let $\lambda = s^r$ be an $r \times s$ rectangle.
The restriction to any fixed $i \in I$ decomposes $\svt^n(\lambda)$ into $i$-K-strings.
\end{lemma}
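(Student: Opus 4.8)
The plan is to assemble each $i$-K-string from two consecutive ordinary $i$-strings joined by a single $f_i^K$ arrow. By~\cite[Thm.~3.9]{MPS18}, $\svt^n(\lambda)$ is a direct sum of highest-weight crystals and hence a normal $U_q(\fsl_n)$-crystal; restricting to $e_i, f_i$ therefore partitions it into genuine $i$-strings. Each $T$ lies in a unique such string, whose head $b$ is characterized by $e_i b = 0$, whose length is $\ell := \varphi_i(b) = \langle \wt(b), \alpha_i^\vee\rangle = \wt(b)_i - \wt(b)_{i+1}$, and whose elements are $b, f_i b, \dotsc, f_i^\ell b$. Writing $H_i = \{b \in \svt^n(\lambda) \mid e_i b = 0\}$ for the set of heads, Definition~\ref{def:K_crystal_ops} shows that both $e_i^K$ and $f_i^K$ vanish off $H_i$, so it remains only to organize the $i$-strings into the two-row pattern of an $i$-K-string.

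First I would check that $f_i^K$ and $e_i^K$ are mutually inverse partial bijections on $H_i$ with $(f_i^K)^2 = (e_i^K)^2 = 0$. Mutual invertibility is Lemma~\ref{lemma:inverse_operations}. For $(f_i^K)^2 = 0$: after $f_i^K$ adds an $i+1$ to the box $\bbb$ of the rightmost unpaired $\bplus$, in $f_i^K b$ the box $\bbb$ carries both $i$ and $i+1$ and lies weakly to the right of the new rightmost unpaired $\bplus$ (the $\bplus$ formerly at $\bbb$ has been cancelled), so the vanishing condition in the definition of $f_i^K$ fires and $f_i^K(f_i^K b) = 0$; invertibility then gives $(e_i^K)^2 = 0$. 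Hence $H_i$ splits into $f_i^K$-orbits of size one or two. Call a head $b$ with $e_i^K b = 0$ a \emph{source}; every size-two orbit contains exactly one source and every singleton orbit is a source.

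The crux, and the step I expect to be the main obstacle, is the dichotomy on a source $b$: I must show that its $i$-string is the top row of a genuine $i$-K-string, i.e.\ that $f_i^K b \neq 0$ exactly when $\ell \geq 1$. Here the defining conditions of $f_i^K$ and $e_i^K$ must be matched precisely. The key observation is that $e_i^K b = 0$ (together with $e_i b = 0$) forces every box containing both $i$ and $i+1$ to lie strictly to the left of the rightmost unpaired $\bplus$: otherwise the rightmost box with both $i$ and $i+1$ would have no unpaired $\bplus$ to its right, and $e_i^K$ would act by deleting its $i+1$, contradicting $e_i^K b = 0$. Consequently, when $\ell \geq 1$ the box $\bbb$ exists and no box weakly to its right contains both $i$ and $i+1$, so $f_i^K b \neq 0$; when $\ell = 0$ there is no unpaired $\bplus$, whence $f_i^K b = 0$ and the $i$-K-string reduces to the singleton $\{b\}$.

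Finally I would identify the bottom row. For a source $b$ with $\ell \geq 1$, adding an $i+1$ to $\bbb$ leaves the number of boxes containing $i$ unchanged and raises the number containing $i+1$ by one, so $\langle \wt(f_i^K b), \alpha_i^\vee\rangle = \ell - 1$; moreover, since $e_i^K(f_i^K b) = b \neq 0$, the definition of $e_i^K$ gives $e_i(f_i^K b) = 0$, so by normality the $i$-string of $f_i^K b$ has length $\ell - 1$, with elements $f_i^K b, \dotsc, f_i^{\ell-1} f_i^K b$. Thus the $i$-K-string issuing from $b$ is exactly the top row $b, \dotsc, f_i^\ell b$, the bottom row $f_i^K b, \dotsc, f_i^{\ell-1} f_i^K b$, and the single arrow $b \xrightarrow{f_i^K} f_i^K b$, an $i$-K-string of length $\ell$. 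Assembling the cases, every $T$ lies in its unique $i$-string, which is the top row of the $i$-K-string sourced at its head $b$ when $e_i^K b = 0$, and the bottom row of the one sourced at $e_i^K b$ otherwise; this exhibits $\svt^n(\lambda)$ as a disjoint union of $i$-K-strings.
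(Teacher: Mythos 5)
Your proof is correct, and its core strategy coincides with the paper's: restrict attention to heads of ordinary $i$-strings (both $e_i^K$ and $f_i^K$ vanish when $e_i T \neq 0$), show $f_i^K f_i^K = 0$ because adding an $i+1$ to $\bbb$ pushes the rightmost unpaired $\bplus$ strictly to the left of a box that now contains both $i$ and $i+1$, and then match the length of the bottom row to the top. Two differences are worth recording. First, for the bottom-row length you invoke normality of $\svt^n(\lambda)$ (via its decomposition into highest weight crystals) together with the weight computation $\langle \wt(f_i^K b), \alpha_i^\vee \rangle = \ell - 1$, whereas the paper argues directly on the signature (adding the $i+1$ to $\bbb$ removes the rightmost uncanceled $\bplus$, so $\varphi_i$ drops by one while $\varepsilon_i$ stays $0$); both are sound, and yours sidesteps any bookkeeping about how deleting a sign interacts with the $\bminus\bplus$ cancellation. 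Second, and more substantially, your ``crux'' step --- that a head $b$ with $e_i^K b = 0$ and $\ell \geq 1$ necessarily has $f_i^K b \neq 0$, since otherwise the rightmost box containing both $i$ and $i+1$ would lie weakly right of $\bbb$, hence would have no unpaired $\bplus$ strictly to its right, forcing $e_i^K b \neq 0$ --- is genuinely needed for the lemma: without it a connected component could be a bare $i$-string with no bottom row, which is not an $i$-K-string and whose $\beta$-character would fail to equal $\varpi_i \xx^{\wt(b)}$. The paper's proof never addresses this point; its opening reduction to showing ``$f_i^K f_i^K T = 0$ and $f_i^{\ell} T = 0$ if and only if $f_i^{\ell-1} f_i^K T = 0$'' tacitly presumes the dashed arrow exists whenever $\ell \geq 1$. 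So your writeup not only reproduces the published argument but also closes a step that the published proof leaves implicit.
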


\begin{proof}
Let $T \in \svt^n(\lambda)$.
Since $f_i^K T = 0$ and $e_i^K T = 0$ whenever $e_i T \neq 0$, we cannot have the local situations around $T$
\[
\begin{tikzpicture}[xscale=2,yscale=-1.5]
\node (top) at (0,0)  {$T$};
\node (f) at (1,0)  {$\cdots$};
\node (K) at (0,1)  {$\bullet$};
\node (fK) at (1,1)  {$\cdots$};
\node (prev) at (-1,0) {$\cdots$};
\draw[->,blue] (top) -- node[midway,above] {$i$} (f);
\draw[->,blue] (K) -- node[midway,above] {$i$} (fK);
\draw[->,blue] (prev) -- node[midway,above] {$i$} (top);
\draw[->,blue,dashed] (top) -- node[midway,left] {$i$} (K);
\end{tikzpicture}
\qquad\qquad
\begin{tikzpicture}[xscale=2,yscale=-1.5]
\node (top) at (0,0)  {$T$};
\node (f) at (1,0)  {$\cdots$};
\node (K) at (0,1)  {$\bullet$};
\node (fK) at (1,1)  {$\cdots$};
\node (prev) at (-1,1) {$\cdots$};
\draw[->,blue] (top) -- node[midway,above] {$i$} (f);
\draw[->,blue] (K) -- node[midway,above] {$i$} (fK);
\draw[->,blue] (prev) -- node[midway,above] {$i$} (K);
\draw[->,blue,dashed] (top) -- node[midway,left] {$i$} (K);
\end{tikzpicture}
\]
respectively.
Next if $f_i^K T \neq 0$, then we have $f_i^K f_i^K T = 0$ from the fact that we added an $i+1$ to the box $\bbb$ corresponding to the rightmost uncanceled $\bplus$, which means the rightmost uncanceled $\bplus$ in $f_i^K T$ is to the left of $\bbb$, which is necessarily unbridged in $f_i^K T$ as otherwise the $\bplus$ in $\bbb$ would be paired.

We need to show that if $e_i T = 0$ and $f_i T \neq 0$, we have either $f_i^K T \neq 0$ or $e_i^K T \neq 0$. Since $f_i T \neq 0$, there exists at least one uncanceled $\bplus$. Let $\bbb$ be the box corresponding to the rightmost uncanceled $\bplus$. If there are no unbridged boxes containing $i$ and $i+1$ to the right of $\bbb$, then we have $f_i^K T \neq 0$. Otherwise let $\bbb'$ be such an unbridged box, and since $\bbb$ contains the rightmost uncanceled $\bplus$, there are no uncanceled $\bplus$ to the right of $\bbb'$. Hence, we have $e_i^K T \neq 0$.

Now we assume $f_i^K T \neq 0$. We have $f_i^{\ell} T = 0$ if and only if $f_i^{\ell-1} f_i^K T = 0$ from the fact that to obtain $f_i^K T$, we removed the rightmost uncanceled $\bplus$ in $T$ from $\bbb$, which leaves the other uncanceled $\bplus$ and $\bminus$ unchanged.
Consequently, we have $\varphi_i(f_i^K T) = \varphi_i(T) - 1$.
Finally, we have $f_i^k T \neq f_i^m f_i^K T$ for all $0 \leq k \leq \ell$ and $0 \leq m \leq \ell - 1$ since $\excess(f_i^K T) = \excess(T) + 1$.
\end{proof}

Consider some $w \in \sym_n^{\lambda}$, and let $i_k < \cdots < i_0$ be from the reduced expression of $w$ given by Lemma~\ref{lemma:coset_repr}. For all $k < j < r$, define $i_j = r - j - 1$.
Define $F(\lambda; w)$ to be the subset of $\svt^n(\lambda)$ such that row $r-j$ has all entries at most $i_j + 1$. Equivalently, the entries in row $j$ are at most $w(j)$ for all $1 \leq j \leq r$. We call such a set-valued tableau a \defn{flagged set-valued tableau}.
Diagrammatically, the flagging in each row is given by the labels on the right
\[
\ytableausetup{boxsize=1.5em}
\ytableaushort{{\ast}{\ast}{\cdots}{\ast}{\none[1]},{\vdots}{\vdots}{\ddots}{\vdots}{\none[\vdots]},{\ast}{\ast}{\cdots}{\ast}{\none[\qquad\quad\ r-k-1]},{\ast}{\ast}{\cdots}{\ast}{\none[\qquad i_k+1]},{\vdots}{\vdots}{\ddots}{\vdots}{\none[\vdots]},{\ast}{\ast}{\cdots}{\ast}{\none[\qquad i_0+1]}}
\]

\begin{ex}
Suppose $w = s_2$ and $\lambda = 2 \times 2$. Then the flagged tableaux in $F(\lambda; w)$ are those set-valued tableaux with shape $2 \times 2$ satisfying the flagging condition that entries of the first row are bounded by $1$ and entries of the second row are bounded by $3$. These bounds can be seen from the fact that
\[
s_2 \{1, 2\} = \{s_2(1), s_2(2)\} = \{1, 3\}.
\]
The tableaux of $F(\lambda; w)$  are precisely the shaded tableaux illustrated in Figure~\ref{fig:Krystal_ex_22}.
\end{ex}

As the next lemma indicates, the flagging conditions characterize K-Demazure crystals. Recall that $\svt^n_w(\lambda)$ denotes the K-Demazure crystal of $\svt^n(\lambda)$ corresponding to $w \in \sym_n$.

\begin{lemma}
\label{lemma:flag_condition}
Let $\lambda = s^r$ be an $r \times s$ rectangle.
For $w \in \sym_n$, we have
\[
\svt^n_w(\lambda) = \svt^n_{\lfloor w \rfloor}(\lambda) = F(\lambda; \lfloor w \rfloor).
\]
\end{lemma}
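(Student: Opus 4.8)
The strategy is to prove the two equalities in turn, with $\svt^n_{\lfloor w \rfloor}(\lambda) = F(\lambda; \lfloor w \rfloor)$ carrying the combinatorial content and $\svt^n_w(\lambda) = \svt^n_{\lfloor w \rfloor}(\lambda)$ reduced to parabolic bookkeeping. Throughout I write $u$ for the minimal highest weight element, which for $\lambda = s^r$ is the tableau whose $m$th row is constant equal to $m$, and I use the recursive structure of the K-Demazure crystal: when $\svt^n_{w'}(\lambda)$ is a union of complete $i$-K-strings, one has $\svt^n_{s_i w'}(\lambda) = \mathcal{F}_i\bigl(\svt^n_{w'}(\lambda)\bigr)$, where $\mathcal{F}_i$ saturates each $i$-source to its full $i$-K-string (Lemma~\ref{lemma:K_strings}). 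Unwinding the definition with Lemma~\ref{lemma:inverse_operations}, this holds because $(e_i^K)^{\max} e_i^{\max}$ carries every element of an $i$-K-string back to its source, so the innermost operators of the defining composite are applied first.

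First I would dispatch $\svt^n_w(\lambda) = \svt^n_{\lfloor w \rfloor}(\lambda)$. Factor $w = \lfloor w \rfloor v$ with $v \in \Stab_n(\lambda)$ and $\ell(w) = \ell(\lfloor w \rfloor) + \ell(v)$, and choose the reduced expression for $w$ obtained by concatenating one for $\lfloor w \rfloor$ with one for $v$, so that the letters of $v$ are innermost and their $\mathcal{F}$-operators act on $\{u\}$ first. Since $\Stab_n(\lambda) = \langle s_i \mid i \neq r \rangle$, it suffices to check $f_i u = f_i^K u = 0$ for every $i \neq r$: for $i < r$ each column of $u$ carries both $i$ and $i+1$, so no unpaired $\bplus$ is created, while for $i > r$ the value $i$ is absent from $u$. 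Hence $\mathcal{F}_{s_i}(\{u\}) = \{u\}$ for each such generator, so the $v$-block fixes $\{u\}$ and only the $\lfloor w \rfloor$-block contributes; the full-commutativity of $\lfloor w \rfloor$ ensures this is independent of the chosen reduced word.

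For the main equality I would induct on the blocks of the reduced expression from Lemma~\ref{lemma:coset_repr}, namely $\lfloor w \rfloor = (s_{i_k}\cdots s_{r-k}) \cdots (s_{i_0}\cdots s_r)$, applying the operators block by block (block $0$, the rightmost, first). The inductive claim is that after the first $j+1$ blocks the resulting set is exactly the flagged set in which rows $r, r-1, \dots, r-j$ carry the flags $i_0+1, \dots, i_j+1$ and all higher rows retain the flag inherited from $u$. The base case $j=0$ is the single-row K-crystal of \cite[Thm.~7.5]{MPS18}: with empty rows below, applying $\mathcal{F}_r, \mathcal{F}_{r+1}, \dots, \mathcal{F}_{i_0}$ (in this order) to the constant row $r$ builds precisely the set-valued fillings of that row with entries at most $i_0+1$. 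For the inductive step I would show that applying $\mathcal{F}_{r-j}, \dots, \mathcal{F}_{i_j}$ raises the flag of row $r-j$ from its inherited value to $i_j+1$ while leaving every other row unchanged, by tracking the explicit moves of Definition~\ref{def:K_crystal_ops} and using Lemma~\ref{lemma:inverse_operations} to match the forward and backward operators.

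The crux, and the expected main obstacle, is the completeness side of this step: I must verify the union-of-complete-$i$-K-strings hypothesis at each intermediate stage, which amounts to showing that during the application of a block the already-saturated lower rows never produce a stray unpaired $\bplus$ that would let some $\mathcal{F}_i$ act below row $r-j$ (over-generating and violating the flag), and conversely that every flagged filling of row $r-j$ is actually reached. I expect to argue that column-strictness forces each relevant value $i$ in rows below $r-j$ to be paired within its column, so the sign-cancellation defining $f_i$ and $f_i^K$ localizes to row $r-j$; this reduces the step to the single-row situation of \cite{MPS18} relative to the fixed content below, with Lemma~\ref{lemma:inverse_operations} guaranteeing that no element is double-counted or omitted, and it simultaneously supplies the union-of-strings property that legitimizes the block-wise recursion. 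Combining the block induction with the parabolic reduction above then yields all three sets equal.
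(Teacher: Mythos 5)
Your overall skeleton is the same as the paper's: peel off the stabilizer so only $\lfloor w \rfloor$ matters, then induct on the blocks of the reduced expression from Lemma~\ref{lemma:coset_repr}, applied right-to-left, with each block raising the flag of one row and the base case being the single-row analysis of~\cite{MPS18}. Your parabolic reduction (checking $f_i u = f_i^K u = 0$ for $i \neq r$) and your base case are fine, and are in fact spelled out more explicitly than in the paper, which leaves the first equality implicit and defers the row-by-row analysis to the proof of~\cite[Lemma~7.4]{MPS18}. One small side point: the recursion $\svt^n_{s_i w'}(\lambda) = \mathcal{F}_i\bigl(\svt^n_{w'}(\lambda)\bigr)$ holds directly from the definition of the K-Demazure crystal together with Lemmas~\ref{lemma:inverse_operations} and~\ref{lemma:K_strings}; no ``union of complete $i$-K-strings'' hypothesis needs to be verified, so that part of your program is a red herring.

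The genuine gap is your proposed mechanism for the crux. The claim that ``column-strictness forces each relevant value $i$ in rows below $r-j$ to be paired within its column, so the sign-cancellation defining $f_i$ and $f_i^K$ localizes to row $r-j$'' is false. Take $\lambda = 2^2$ (so $r=2$), $n=4$, and $w = (s_2 s_1)(s_3 s_2)$, so $i_0 = 3$, $i_1 = 2$. During block $1$ (which is supposed to raise row $1$), after applying $\mathcal{F}_1$ the set contains $T = \ytableaushort{12,23}$\,. For $i = 2$, the entry $2$ in row $2$, column $1$ lies in a column containing no $3$ --- column-strictness only forces a $1$ \emph{above} it, and forces nothing below --- so that column carries an unpaired $\bplus$, and indeed $f_2 T = \ytableaushort{12,33}$ and $f_2^K T = \ytableaushort{12,{2,\!3}3}$ both modify row $2$. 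So block operators genuinely act below the row being raised, and the inductive step is \emph{not} the single-row situation relative to fixed content below. What rescues the containment $\subseteq$ is not localization but monotonicity of the flags: since $i_j < i_{j-1} < \cdots < i_0$, any entry (at most $i_j + 1$) that a block-$j$ operator creates in a lower row stays within that row's strictly larger flag. But the completeness direction $\supseteq$, which you correctly identify as the main obstacle, now requires a genuine cross-row analysis of the $i$-K-strings --- this is exactly the role of the paper's observation that a box containing both $i$ and $i+1$ admits no box containing an $i$ to its right, and of its appeal to the argument of~\cite[Lemma~7.4]{MPS18} --- and your localization argument cannot supply it.
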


\begin{proof}
Let $u$ be the minimal highest weight tableau of shape $\lambda$.
That 
\[
\svt^n_w(\lambda) = \svt^n_{\lfloor w \rfloor}(\lambda)
\] is immediate from the fact that we have $f_i u = 0$ for all $i \in I \setminus \{r\}$, \textit{i.e.}, whenever $s_i \in \Stab_n(\lambda)$. Hence, for the remainder of the proof we assume $w = \lfloor w \rfloor$.

Using Lemma~\ref{lemma:coset_repr}, write $w$ as a reduced expression
\begin{equation}\label{eq:red}
(s_{i_k} \dotsm s_{r-k+1} s_{r-k}) \dotsm (s_{i_1} \dotsm s_{r} s_{r-1}) (s_{i_0} \dotsm s_{r+1} s_r)
\end{equation}
for some $-1 \leq k < r$ and $1 \leq i_k < \cdots < i_1 < i_0 \leq n$. 

We prove the lemma by induction on $k$. The base case of $k = -1$ (so $w = 1$) is trivial since clearly
\[
\svt^n_1(\lambda) = F(\lambda;  1) = \{ u \}.
\]
Otherwise, $k\geq 0$ and let 
\[
w' = (s_{i_{k-1}} \dotsm s_{r-k+1} s_{r-(k-1)}) \dotsm (s_{i_1} \dotsm s_{r} s_{r-1}) (s_{i_0} \dotsm s_{r+1} s_r) \in \sym_n^{\lambda}
\]
be the product of all but the leftmost factor in our reduced expression~\eqref{eq:red} for $w$. Inductively, we assume 
\[
\svt^n_{w'}(\lambda) = F(\lambda;  w').
\]

By the flagging and semistandard conditions, for any tableau $T \in \svt^n_{w'}(\lambda)$ and $j \leq r - k$, the entries in row $j$ of $T$ are all exactly $j$.
Furthermore, every tableau $U \in \svt^n_w(\lambda) \setminus \svt^n_{w'}(\lambda)$ merely differs from an element in $\svt^n_{w'}(\lambda)$ by changing one or more entries in row $r - k$. 

It remains to check that the tableaux $U \in \svt^n_w(\lambda) \setminus \svt^n_{w'}(\lambda)$ differ from the elements of $\svt^n_{w'}(\lambda)$ exactly by the appropriate change in the flagging condition on row $r - k$. This check is identical to the proof of~\cite[Lemma~7.4]{MPS18} (which is the $r=1$ case of the current lemma), except for being notationally more cumbersome. It is a straightforward induction on the length $i_k - (r-k)$ of the leftmost factor of the reduced expression \eqref{eq:red}.
%
%
\end{proof}

The following theorem confirms~\cite[Conj.~7.12]{MPS18}.

\begin{thm}
\label{thm:Krystal_rectangles}
Let $\lambda = s^r$ be an $r \times s$ rectangle.
Then $\svt^n(\lambda)$ is a K-crystal.
\end{thm}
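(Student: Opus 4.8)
The plan is to verify the three defining properties of a K-crystal, namely (K.1), (K.2), and (K.3), for $B := \svt^n(\lambda)$, drawing on the four preceding lemmas which already carry most of the combinatorial weight. For (K.1) I would take $u$ to be the superstandard tableau whose every box in row $j$ is the singleton $\{j\}$. Since $u$ is $\fsl_n$-highest weight, $e_i u = 0$ for all $i$, and since no box of $u$ contains two entries, Definition~\ref{def:K_crystal_ops} gives $e_i^K u = 0$ as well. Uniqueness is then immediate once $B_{w_0} = B$ is known (established in (K.2) below): if $T \in B$ satisfies $e_i T = e_i^K T = 0$ for every $i$, then each operator $e_{i_j}^{\max}$ and $(e_{i_j}^K)^{\max}$ in the defining composition for $B_{w_0}$ fixes $T$, so the composition equals $T$; but it must also equal $u$, forcing $T = u$. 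Generation of $B$ from $u$ is likewise the statement $B_{w_0} = B$, read through Lemma~\ref{lemma:inverse_operations}, which makes $e_i^K, f_i^K$ (and $e_i, f_i$) mutually inverse, so reachability to $u$ by raising operators coincides with reachability from $u$ by lowering operators.

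For (K.2) the key input is Lemma~\ref{lemma:flag_condition}, which identifies $\svt^n_w(\lambda) = \svt^n_{\lfloor w\rfloor}(\lambda) = F(\lambda; \lfloor w\rfloor)$. Because the flagged set $F(\lambda;\lfloor w\rfloor)$ is defined purely through the minimal coset representative $\lfloor w\rfloor$ and its per-row entry bounds, it manifestly does not depend on the chosen reduced expression for $w$, which is the first half of (K.2). For $B_{w_0} = B$ I would check that the flag bounds attached to $\lfloor w_0\rfloor$ are the maximal ones $n - r + \ell$ in row $\ell$ already forced on every element of $\svt^n(\lambda)$ by the semistandard (column-strict) condition; hence $F(\lambda;\lfloor w_0\rfloor) = \svt^n(\lambda)$.

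For (K.3) I would induct on $\ell(\lfloor w\rfloor)$ using the reduced expression $\lfloor w\rfloor = s_{i_1}\cdots s_{i_\ell}$ furnished by Lemma~\ref{lemma:coset_repr}; by Lemma~\ref{lemma:flag_condition} it suffices to compute $\ch_\beta(B_{\lfloor w\rfloor})$. Writing $v = s_{i_2}\cdots s_{i_\ell}$, the definition of the K-Demazure crystal together with the $i_1$-K-string decomposition of Lemma~\ref{lemma:K_strings} exhibits $B_{\lfloor w\rfloor}$ as the union of the full $i_1$-K-strings whose source lies in $B_v$. Invoking the formula recorded just before Lemma~\ref{lemma:coset_repr} — that the $\beta$-character of the $i$-K-string starting at an $i$-dominant source $b$ equals $\varpi_i\xx^{\wt(b)}$ — the goal is the recursion $\ch_\beta(B_{\lfloor w\rfloor}) = \varpi_{i_1}\ch_\beta(B_v)$, whence by induction $\ch_\beta(B_w) = \varpi_{i_1}\cdots\varpi_{i_\ell}\xx^\lambda = \varpi_{\lfloor w\rfloor}\xx^\lambda = L_{w\lambda}$, the last equality being the definition of the Lascoux polynomial at $a = \lfloor w\rfloor\lambda = w\lambda$.

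The hard part will be justifying this character recursion. A naive application of $\varpi_{i_1}$ to $\ch_\beta(B_v)$ fails if $B_v$ meets some $i_1$-K-string in a proper interior segment, exactly as $\pi_i$ overcounts interior weights in the classical Demazure setting. What must be established is a \emph{top-or-full} property: $B_v = F(\lambda;\lfloor v\rfloor)$ meets each $i_1$-K-string either in its source alone or in the entire string. I would prove this from the explicit reduced word of Lemma~\ref{lemma:coset_repr}, under which passing from $v$ to $s_{i_1}v$ relaxes exactly one per-row flag bound; the fully-commutative structure of the rectangular case is what prevents the interference between the different simple directions that would otherwise create interior segments. Granting the top-or-full property, $\varpi_{i_1}$ sends each source character to its full $i_1$-K-string and acts idempotently on an already-completed string, so $\varpi_{i_1}\ch_\beta(B_v) = \ch_\beta(B_{\lfloor w\rfloor})$ and the induction closes, completing the verification of (K.3) and hence the theorem.
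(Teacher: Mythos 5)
Your proposal is correct and takes essentially the same approach as the paper: all three axioms are reduced to the flagging characterization of Lemma~\ref{lemma:flag_condition}, with (K.3) obtained by peeling off one letter of the reduced word from Lemma~\ref{lemma:coset_repr} and applying $\varpi_{i}$ string by string via Lemma~\ref{lemma:K_strings}. The ``top-or-full'' property you isolate as the crux is exactly what the paper records as Corollary~\ref{cor:demazure_strings}, deduced (as you propose) from the flagging lemma, and your inductive character recursion is the argument the paper compresses into its one-sentence justification of (K.3); the only ingredient the paper makes explicit that you leave implicit is the appeal to Stembridge's full-commutativity theorem to get reduced-word independence in (K.2).
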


\begin{proof}
(K.1) is immediately from Lemma~\ref{lemma:flag_condition}.
(K.3) follows from Lemma~\ref{lemma:flag_condition} and that the properties of the Demazure--Lascoux operators imply $\varpi_w \xx^{\lambda} = \varpi_{\lfloor w \rfloor} \xx^{\lambda}$.
To show (K.2), note that Lemma~\ref{lemma:flag_condition} implies $\svt^n_w(\lambda)$ only depends on the minimal length coset representative.
By~\cite[Thm.~6.1]{Stembridge96}, every minimal length coset representative of $\sym_n^{\lambda}$ is fully-commutative (see also~\cite[Prop.~2.4]{Stembridge96}); in other words, they differ only by the commutation relations $s_i s_j = s_j s_i$ for $\lvert i - j \rvert > 1$.
It is clear that the (K-)crystal operators $f_i, f_i^K$ commute with $f_j, f_j^K$ for $\lvert i - j \rvert > 1$, and hence the K-Demazure crystal is independent of the choice of reduced expression.
\end{proof}

We also have the following K-theoretic analog of~\cite[Prop.~3.3.4]{K93}. 

\begin{cor}
\label{cor:demazure_strings}
Let $\lambda = s^r$ be an $r \times s$ rectangle.
Consider an $i$-K-string $S$ of $\svt^n(\lambda)$, and let $b$ be the highest weight element of $S$. Then, the set $\svt^n_w(\lambda) \cap S$ is either empty, $S$, or $\{b\}$.
\end{cor}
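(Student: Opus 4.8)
The plan is to read off everything from the flag description of the K-Demazure crystal provided by Lemma~\ref{lemma:flag_condition}. Write $S = \{f_i^c b : 0 \le c \le \ell\} \cup \{f_i^c f_i^K b : 0 \le c \le \ell - 1\}$ for the $i$-K-string with highest weight element $b$ (so $e_i b = 0$), as guaranteed by Lemma~\ref{lemma:K_strings}. By Lemma~\ref{lemma:flag_condition}, membership in $\svt^n_w(\lambda) = F(\lambda; \lfloor w \rfloor)$ is governed by a flag: there are bounds $\phi_1 \le \phi_2 \le \cdots \le \phi_r$, monotone because the reduced expression of Lemma~\ref{lemma:coset_repr} has $i_0 > i_1 > \cdots > i_{r-1}$, such that $T \in \svt^n_w(\lambda)$ if and only if every entry in row $m$ of $T$ is at most $\phi_m$. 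Thus the whole statement reduces to tracking, along $S$, the maximum entry of each row against these monotone bounds.

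First I would record the elementary monotonicity that $f_i$ and $f_i^K$ never decrease the maximum entry of any row: each either replaces an $i$ by $i+1$, adjoins an $i+1$, or moves an $i+1$ into the box $\bbb$ while deleting an $i$ from the box immediately to its right in the same row, so the only deletion is compensated by a strictly larger entry in that same row. Consequently the row maxima of any $x \in S$ dominate those of $b$ coordinatewise. This immediately settles one case: if $b \notin \svt^n_w(\lambda)$, then some row of $b$ already exceeds its bound, hence so does the same row of every $x \in S$, and $S \cap \svt^n_w(\lambda) = \emptyset$.

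Now assume $b \in \svt^n_w(\lambda)$, and let $m_1$ be the row of the box $\bbb$ carrying the rightmost unpaired $\bplus$ of $b$; this is the box first altered by $f_i$ and the box on which $f_i^K$ acts, so every element of $S$ other than $b$ has an $i+1$ in row $m_1$ (once created, this $i+1$ persists, since its column is no longer an unpaired $\bplus$ and subsequent operators only move leftward). The key step, and the main obstacle, is the geometric claim that every box modified anywhere in $S$ lies in a row $\ge m_1$. For the $i$-string part, the modified boxes are exactly the unpaired $\bplus$ boxes, altered from right to left, so it suffices to show that for columns $c < c'$ both carrying an unpaired $\bplus$, the $i$ in column $c'$ occupies a row no lower than the $i$ in column $c$. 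This follows from the set-valued semistandard inequalities: were the $i$ in column $c'$ strictly lower, then the box of column $c'$ in the row of the $i$ of column $c$ would lie strictly above that $i$, hence have all entries $< i$, contradicting the row condition against the box of column $c$ in that same row, which contains $i$. Hence the rightmost unpaired $\bplus$ sits in the topmost such row $m_1$, and all modifications proceed weakly downward.

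Combining this with the monotonicity of the flag finishes the argument. If $\phi_{m_1} \ge i+1$, then every newly created $i+1$ lies in some row $m \ge m_1$ with $\phi_m \ge \phi_{m_1} \ge i+1$; since $b$ already satisfies the flag and no other entries change, every element of $S$ satisfies it too, so $S \cap \svt^n_w(\lambda) = S$. If instead $\phi_{m_1} \le i$, then the $i+1$ forced into row $m_1$ violates the bound $\phi_{m_1}$ for every element of $S$ other than $b$, so $S \cap \svt^n_w(\lambda) = \{b\}$. These three outcomes are precisely the asserted trichotomy. The only substantive content beyond Lemma~\ref{lemma:flag_condition} is the geometric monotonicity of the modified rows; the remainder is bookkeeping with the flag bounds.
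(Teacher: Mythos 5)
Your proof is correct and takes the same route as the paper: the paper's entire proof of Corollary~\ref{cor:demazure_strings} is the single sentence ``This follows immediately from Lemma~\ref{lemma:flag_condition}.'' Your argument is simply a careful elaboration of that reduction --- using the monotone flag bounds from Lemma~\ref{lemma:flag_condition}, the persistence of created $(i+1)$'s, and the observation that all boxes modified along an $i$-K-string lie weakly below the row of the rightmost unpaired $\bplus$ --- all of which the paper leaves implicit.
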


\begin{proof}
This follows immediately from Lemma~\ref{lemma:flag_condition}, the semistandardness, and that the flagging on the rows is strictly increasing.
\end{proof}

We also have the following interpretation of certain Lascoux polynomials as instances of (nonsymmetric) Grothendieck polynomials, indexed by some $w \in \sym_n$. Recall from~\cite{Lascoux90,LS82,LS83,FK94} that the (nonsymmetric) \defn{($\beta$-)Grothendieck polynomial} is defined by
\[
\G_{w_0 s_{i_1} \cdots s_{i_{\ell}}} := \partial^{\beta}_{i_1} \cdots \partial^{\beta}_{i_{\ell}} x_1^{n-1} \dotsm x_{n-1}^1 x_n^0,
\quad
\partial_i^{\beta} f = \frac{(1 + \beta x_{i+1}) \cdot f - (1 + \beta x_i) \cdot s_i f}{x_i - x_{i+1}},
\]
where $s_{i_1} \dotsm s_{i_{\ell}}$ is a reduced expression.

\begin{cor}
\label{cor:vexillary_Lascoux}
Let $\lambda = s^r$ be an $r \times s$ rectangle. Let
\[
w = (s_k \dotsm s_2 s_1) (s_{k+1} \dotsm s_3 s_2) \dotsm (s_{k+r-1} \dotsm s_{r+1} s_r)
\]
for some $k \geq 1$, and let
\[
\widetilde{w} = s_{m-1} (s_{m-2} s_{m-1}) \dotsm (s_{r+1} \dotsm s_{m-1}) (s_r \dotsm s_{k-1}) \dotsm (s_1 \dotsm s_{k-1})
\in S_m
\]
where $m = s + k + 1$.
Then, we have
\[
L_{w\lambda}(\xx; \beta) = \G_{w_0\widetilde{w}^{-1}}(\xx; \beta).
\]
\end{cor}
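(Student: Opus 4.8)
The plan is to realize both sides as weight generating functions over the same set of flagged set-valued tableaux. For the left-hand side I would use the results of this section directly: since $\lambda = s^r$ is a rectangle, Theorem~\ref{thm:Krystal_rectangles} equips $\svt^n(\lambda)$ with a K-crystal structure, so property~(K.3) combined with Lemma~\ref{lemma:flag_condition} gives
\[
L_{w\lambda}(\xx;\beta) \;=\; \ch_{\beta}\bigl(\svt^n_w(\lambda)\bigr) \;=\; \sum_{T \in F(\lambda;\lfloor w\rfloor)} \wt_{\beta}(T).
\]
The permutation $w$ in the statement is exactly the minimal coset representative whose reduced word is produced by Lemma~\ref{lemma:coset_repr} with $i_j = k+r-1-j$ for $0\le j\le r-1$; in particular $\lfloor w\rfloor = w$. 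Substituting these values into the flag condition defining $F(\lambda;w)$ shows that the left-hand side is the generating function $\sum_T \wt_{\beta}(T)$ taken over set-valued tableaux of the $r\times s$ rectangle whose $p$-th row (counted from the top) has all entries at most $k+p$, i.e.\ with flag $(k+1,k+2,\dots,k+r)$.

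For the right-hand side I would identify the Grothendieck polynomial $\G_{w_0\widetilde w^{-1}}$ with the same flagged generating function. The key structural fact, signalled by the name of the corollary, is that $\pi := w_0\widetilde w^{-1}$ is vexillary; the two blocks defining $\widetilde w$ are arranged precisely so that the shape and flag attached to $\pi$ are the $r\times s$ rectangle and $(k+1,\dots,k+r)$. Concretely, I would compute the Lehmer code (or essential set) of $\pi$ from the explicit reduced word for $\widetilde w$, check that $\pi$ avoids the pattern $2143$, and read off that its partition shape is $s^r$ with associated flag $(k+1,\dots,k+r)$. With this in hand I would invoke the flagged set-valued tableau formula for Grothendieck polynomials of vexillary permutations — the K-theoretic refinement, due to Knutson--Miller--Yong, of the Lascoux--Sch\"{u}tzenberger/Wachs theorem that a vexillary Schubert polynomial is a flagged Schur polynomial — to write $\G_{\pi}(\xx;\beta)$ as a sum of $\wt_{\beta}(T)$ over the very same flagged tableaux. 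Comparing the two expansions termwise then yields the claimed identity.

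I expect the genuine work to be concentrated in the middle step, and this is the step I would treat as the main obstacle: translating the explicit (and long) reduced word for $\widetilde w$ into the one-line notation of $\pi = w_0\widetilde w^{-1}$, verifying $2143$-avoidance, and pinning down its shape and flag so that they match $s^r$ and $(k+1,\dots,k+r)$ exactly. A second, more delicate point is that the Grothendieck polynomials here are generated by the operators $\partial_i^{\beta}$ fixed in this paper, whereas $L_{w\lambda}$ arises from the Demazure--Lascoux operators $\varpi_i$. One must therefore check that the vexillary formula is being applied in a normalization compatible with the $\partial_i^{\beta}$ convention — in particular that the flag is recorded along rows rather than columns and that no transpose of the shape is silently introduced — so that the two sets of flagged set-valued tableaux literally coincide instead of merely matching under a symmetry. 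Establishing this compatibility, together with the code computation, completes the proof.
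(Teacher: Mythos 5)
Your proposal follows essentially the same route as the paper's proof, which likewise combines Theorem~\ref{thm:Krystal_rectangles} and Lemma~\ref{lemma:flag_condition} (giving the expansion of $L_{w\lambda}$ over flagged set-valued tableaux with flag $(k+1,\dots,k+r)$) with the flagged/vexillary Grothendieck polynomial formula of Matsumura (alternatively Knutson--Miller--Yong). The bookkeeping you identify as the main obstacle --- extracting the one-line form, shape, and flag of $w_0\widetilde{w}^{-1}$ and checking $2143$-avoidance --- is precisely what the paper leaves implicit in its citations, so your outline is a faithful (indeed more explicit) version of the published argument.
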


\begin{proof}
Theorem~\ref{thm:Krystal_rectangles} shows that $L_{w\lambda}(\xx; \beta)$ is the character of the K-Demazure crystal and Lemma~\ref{lemma:flag_condition} shows that this character is a generating function for a class of flagged set-valued tableaux. The identification with a Grothendieck polynomial then follows from the formula of~\cite[Thm.~3.3]{Matsumura19} (alternatively~\cite[Thm.~5.8]{KMY09}).
\end{proof}

It is clear that the permutations $w_0\widetilde{w}^{-1}$ appearing in Corollary~\ref{cor:vexillary_Lascoux} are vexillary \textit{i.e.} $2143$-avoiding). Since the greatest term of $L_{w\lambda}(\xx; 0) $ in reverse lexicographic order is $\xx^{w \lambda}$ and the greatest term of $\G_{w_0\widetilde{w}^{-1}}(\xx; 0)$ in the same order is the Lehmer code of $w_0\widetilde{w}^{-1}$, we find that $w \lambda$ is the Lehmer code of $w_0\widetilde{w}^{-1}$. Hence, the permutations $w_0\widetilde{w}^{-1}$ are Grassmannian, and so the Grothendieck polynomials appearing in Corollary~\ref{cor:vexillary_Lascoux} are actually \emph{symmetric} Grothendieck polynomials, but symmetric only in some initial segment of the variables $\xx$.

\begin{ex}
Let $\lambda = 2^2$ be a $2 \times 2$ rectangle. We have
\begin{align*}
L_{s_1 s_2 \lambda}(\xx; \beta) & = \G_{w_0 (s_2 s_1) s_2 (s_4 s_3) s_4}(x_1, x_2, x_3, x_4, x_5; \beta),
\\
L_{s_2 s_1 s_3 s_2 \lambda}(\xx; \beta) & = \G_{w_0 (s_3 s_2 s_1) (s_3 s_2) (s_5 s_4 s_3) (s_5 s_4) s_5}(x_1, x_2, x_3, x_4, x_5, x_6; \beta).
\end{align*}
\end{ex}

The Lascoux polynomials given by Corollary~\ref{cor:vexillary_Lascoux} are not the only ones that are equal to a Grothendieck polynomial.
\ytableausetup{boxsize=0.8em}
For example, if $\lambda = 42 = \ydiagram{4,2}$, then
\[
L_{s_2 \lambda}(\xx; \beta) = \G_{w_0 (s_2 s_4 s_3 s_4)}(x_1, x_2, x_3, x_4, x_5; \beta).
\]
They are however the only Lascoux polynomials equal to a Grothendieck polynomial for which $\lambda$ is a rectangle.

T.~Matsumura and S.~Sugimoto have informed the authors that every flagged Grothendieck polynomial is a Lascoux polynomial by extending the proof of~\cite[Thm.~3.3]{Matsumura19}, which appears in their work~\cite{MS19}. (This is the K-theoretic analog of the fact that every flagged Schur function is a Demazure character~\cite{RS95}.) Thus in particular, for $w$ a vexillary permutation, the Grothendieck polynomial $\G_w$ is known to be a flagged Grothendieck polynomial~\cite[Thm.~5.8]{KMY09}, so $\G_w$ is also the Lascoux polynomial $L_{a}$ for $a$ the Lehmer code of $w$.
In the special case $\beta=0$, A.~Postnikov and R.~Stanley showed that a Demazure character $\pi_w \xx^{\lambda}$ is a flagged Schur function if and only if $w\in \sym_n^{\lambda}$ is $312$-avoiding~\cite{PS09}. These facts motivate the following conjecture.

\begin{conj}\label{conk:312}
Let $\lambda$ be any partition and $w \in \sym_n^{\lambda}$.
Then the Lascoux polynomial $L_{w\lambda}(\xx; \beta)$ is a flagged Grothendieck polynomial if and only if $w$ is $312$-avoiding.
\end{conj}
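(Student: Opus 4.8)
The plan is to reduce the conjecture to the classical $\beta=0$ theorem of Postnikov--Stanley~\cite{PS09} together with the forthcoming result of Matsumura--Sugimoto~\cite{MS19} that every flagged Grothendieck polynomial is a Lascoux polynomial, bridging the two by a leading-term comparison. Both implications rest on the observation that setting $\beta=0$ collapses a flagged Grothendieck polynomial $\G_{\phi}$ to the flagged Schur function $s_{\phi}$ with the same flag $\phi$ (only the excess-zero, \textit{i.e.}\ ordinary, tableaux survive), and collapses $L_{w\lambda}(\xx;\beta)$ to the Demazure character $\pi_w\xx^{\lambda} = \kappa_{w\lambda}$.

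For the backward implication, suppose $L_{w\lambda}(\xx;\beta)$ equals a flagged Grothendieck polynomial $\G_{\phi}$ for some flag $\phi$. Specializing $\beta=0$ gives $\pi_w\xx^{\lambda} = s_{\phi}$, so the Demazure character is a flagged Schur function; the Postnikov--Stanley theorem~\cite{PS09} then forces $w$ to be $312$-avoiding. This direction I expect to be entirely routine.

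For the forward implication, assume $w \in \sym_n^{\lambda}$ is $312$-avoiding. By~\cite{PS09} there is an explicit weakly increasing flag $\phi$, read off from $w$ and $\lambda$, with $\pi_w\xx^{\lambda} = s_{\phi}$. I would then consider the flagged Grothendieck polynomial $\G_{\phi}$ with this same flag. By~\cite{MS19}, $\G_{\phi} = L_a(\xx;\beta)$ for some composition $a$, and to identify $a$ I would compare greatest terms in reverse lexicographic order after setting $\beta=0$, exactly as in the argument following Corollary~\ref{cor:vexillary_Lascoux}: the leading term of $\G_{\phi}(\xx;0)=s_{\phi}=\pi_w\xx^{\lambda}$ is $\xx^{w\lambda}$, while the leading term of $L_a(\xx;0)=\kappa_a$ is $\xx^{a}$. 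Hence $a = w\lambda$, and therefore $L_{w\lambda}(\xx;\beta)=\G_{\phi}$ is a flagged Grothendieck polynomial.

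The main obstacle is the forward implication's dependence on the as-yet unpublished result~\cite{MS19}; this is precisely why the statement must remain conjectural despite the clean reduction above. A fully self-contained proof would instead need to establish directly that the Postnikov--Stanley flag $\phi$ yields $L_{w\lambda}(\xx;\beta) = \G_{\phi}$, presumably either by matching Demazure--Lascoux recursions on both sides or by constructing a weight- and excess-preserving bijection between the flagged set-valued tableaux enumerated by $\G_{\phi}$ and a tableau model for $L_{w\lambda}$; this combinatorial lift of Postnikov--Stanley from $\beta=0$ to general $\beta$ is the genuinely hard step, since the excess statistic has no $\beta=0$ analog to guide the bijection. I would also check the routine points that the $\beta=0$ specialization of $\G_{\phi}$ is exactly $s_{\phi}$, that $\phi$ is an admissible flag, and that distinct compositions give distinct Lascoux polynomials, so that the leading-term match pins down $a$ uniquely.
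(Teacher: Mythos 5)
The first thing to note is that the paper contains no proof of this statement: it appears as Conjecture~\ref{conk:312}, stated immediately after a paragraph offering precisely the facts you cite as \emph{motivation} --- the announced Matsumura--Sugimoto result~\cite{MS19} that every flagged Grothendieck polynomial is a Lascoux polynomial, the vexillary case~\cite{KMY09}, and the Postnikov--Stanley characterization~\cite{PS09} of the $\beta=0$ specialization. So there is no proof of the paper's to compare against; your proposal is essentially that motivating paragraph reorganized into a conditional argument.

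Judged on its own terms, your logic is sound but does not close the conjecture. The ``only if'' direction is unconditional and correct: an equality of polynomials over $\ZZ[\beta]$ survives setting $\beta=0$, which sends a flagged Grothendieck polynomial to the flagged Schur function with the same flag and sends $L_{w\lambda}(\xx;\beta)$ to $\kappa_{w\lambda}$, so~\cite{PS09} forces $w$ to be $312$-avoiding. The ``if'' direction, as you say, rests entirely on~\cite{MS19}: granting it, your leading-term comparison (the same argument the paper uses after Corollary~\ref{cor:vexillary_Lascoux}, together with the standard fact that key polynomials indexed by distinct weak compositions are distinct, since they form a $\ZZ$-basis of the polynomial ring) does pin down $a = w\lambda$ and hence identifies $L_{w\lambda}(\xx;\beta)$ with the flagged Grothendieck polynomial built on the Postnikov--Stanley flag. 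Two points you would need to make explicit: (i) the flag produced by~\cite{PS09} must be admissible (weakly increasing) in the sense required by the class of flagged Grothendieck polynomials that~\cite{MS19} covers, since otherwise that result does not apply to it; and (ii) \cite{MS19} was unpublished and unverified at the time of writing. This last point is the genuine gap --- the same one the authors faced --- and it is exactly why the statement is a conjecture in the paper rather than a theorem conditional on~\cite{MS19}. Your attempt is a correct reduction of the conjecture to~\cite{MS19} plus~\cite{PS09}, which you candidly acknowledge, but it is not a proof, and with only the tools cited in the paper it cannot be made into one.
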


Consider an arbitrary flagging condition with Lemma~\ref{lemma:flag_condition}. Then we can interpret these Lascoux polynomials as a Jacobi--Trudi-type determinant, where each part is the Segre class of a vector bundle~\cite{HIMN17,HM18}. (Although flagged set-valued tableaux also appear in~\cite{GK15}, that use appears unrelated to ours, as the weights considered in the two contexts seem to be irreconcilable.)

\section{Bijection with K-Kohnert diagrams}
\label{sec:Kohnert}

Recall that there is a natural bijection between the set of semistandard Young tableaux of shape $1^r$ with entries at most $n$ and the collection of subsets of $\{1, \dotsc, n\}$ of size $r$.
For row $i$ (starting from the bottom row and going up) of a K-Kohnert diagram $D$, consider  the subset of $\{1, \dotsc, n\}$ given by the horizontal coordinates of the unmarked boxes. Construct column $i$ (from right to left) of a (\emph{a priori} non-necessarily semistandard) tableau $T$ by applying the natural bijection given above to this subset. Now, for every marked box in position $(x,i)$ of $D$, there is a rightmost unmarked box $(x',i)$ to the left of $(x,i)$. Insert $x$ into the cell of column $i$ containing $x'$.
In other words, we insert $x$ into the topmost (\textit{i.e.} highest) possible cell of column $i$ such that the resulting column is semistandard. (Note that {\it a priori} the rows may not be semistandard.)  Write $\phi(D)$ for the resulting tableau $T$.

It is straightforward to see that the map $\phi$ is invertible and $\beta$-weight preserving. We will show below that $\phi(D)$ is in fact always a semistandard set-valued tableau. Indeed, our main effort in this section is to establish the following.
Recall that $\mcD_{w\lambda}$ is the set of K-Kohnert diagrams obtained from $w\lambda$.

\begin{prop}
\label{prop:K_skyline_bijection}
Let $\lambda = s^r$ be an $r \times s$ rectangle.
For any $w \in \sym_n^{\lambda}$, $\phi$ restricts to a $\beta$-weight preserving bijection
\[
\phi \colon \mcD_{w\lambda} \to \svt^n_w(\lambda).
\]
\end{prop}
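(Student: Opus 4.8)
The plan is to leverage the already-noted fact that $\phi$ is a globally invertible, $\beta$-weight-preserving correspondence between all K-Kohnert diagrams and the tableaux-with-strictly-increasing-columns produced by the construction; it therefore suffices to prove the set equality $\phi(\mcD_{w\lambda}) = \svt^n_w(\lambda)$, after which the bijection and $\beta$-weight preservation are automatic. First I would rewrite the target via Lemma~\ref{lemma:flag_condition}, so that $\svt^n_w(\lambda) = F(\lambda;\lfloor w\rfloor)$ becomes the explicit set of flagged tableaux, the row-$(r-j)$ bound $i_j+1$ being read off from the reduced expression supplied by Lemma~\ref{lemma:coset_repr}. On the diagram side I would record that the initial skyline for $a = w\lambda$ consists of $r$ full columns of height $s$ placed in the support $\{w(1) < \cdots < w(r)\}$, and observe the basic invariant that every Kohnert and K-Kohnert move preserves the number of unmarked boxes in each row. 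Hence each row of any $D \in \mcD_{w\lambda}$ has exactly $r$ unmarked boxes, so $\phi(D)$ has shape $\lambda$ with strictly increasing columns.

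With this scaffolding, the heart of the argument is a reachability analysis for Kohnert and K-Kohnert moves applied to a rectangular skyline. For the forward inclusion I would establish two monotonicity statements about the positions a box can occupy: (i) reading the rows of $D$ from bottom to top, the $\ell$-th unmarked box moves weakly leftward, which translates under $\phi$ into the rows of $\phi(D)$ being weakly increasing in the set-valued sense---this is the rectangle-specific phenomenon guaranteeing that $\phi(D)$ is a genuine semistandard set-valued tableau and not merely column-strict; and (ii) the leftmost horizontal position a box in a given row can reach is exactly the bound dictated by the factor $s_{i_j}\cdots s_{r-j}$ of the reduced word, which under $\phi$ becomes precisely the flag bound $i_j+1$ on the corresponding row of the tableau. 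Together (i) and (ii) give $\phi(\mcD_{w\lambda}) \subseteq F(\lambda;\lfloor w\rfloor)$.

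For the reverse inclusion I would argue surjectivity by construction: given a flagged tableau $T \in F(\lambda;\lfloor w\rfloor)$, form $D = \phi^{-1}(T)$ and exhibit an explicit sequence of moves realizing it from the skyline of $w\lambda$, processing the rows of $D$ in a suitable order (top rows first, equivalently columns of $T$ from left to right) so that boxes already placed never obstruct later moves. The flag bounds guarantee that every unmarked box of $D$ sits weakly to the right of a position it can legally reach, and each free entry of $T$ is produced by a K-Kohnert move depositing the corresponding marked box in its required cell; the insertion rule of $\phi$ (free entries enter the topmost admissible cell of a column) is matched exactly by the K-Kohnert rule that a moving box may not pass a marked box. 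I would organize the whole argument as an induction on $k$, peeling off the outermost cyclic factor $s_{i_k}\cdots s_{r-k}$ of the reduced word and tracking its effect simultaneously on $\mcD_{w\lambda}$ and on the flag defining $F(\lambda;\lfloor w\rfloor)$.

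The main obstacle I anticipate is precisely this reachability bookkeeping in the presence of marks: showing both that no sequence of moves can push an unmarked box farther left than its flag bound permits (forward direction) and that the required diagram is genuinely reachable (surjectivity), while verifying that the ``cannot pass a marked box'' constraint reproduces exactly the set-valued semistandard and flag conditions on $T$. Controlling the interaction between unmarked boxes (which govern the anchors and the shape) and marked boxes (which govern free entries and the excess) is the delicate point, and I expect the inductive peeling of the reduced word, combined with the monotonicity in (i), to be the right framework for making that interaction precise.
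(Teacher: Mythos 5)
Your skeleton matches the paper's: reduce via Lemma~\ref{lemma:flag_condition} to the set equality $\phi(\mcD_{w\lambda}) = F(\lambda;\lfloor w\rfloor)$, prove the forward inclusion by a monotonicity argument, and prove surjectivity by explicitly realizing each flagged tableau through a sequence of moves processed from the top down. The problem is that the two claims carrying all the weight of the forward inclusion are asserted, not proved. Your claim (i) --- that for every reachable $D$ the $\ell$-th unmarked box per row moves weakly leftward reading bottom to top, i.e.\ that $\phi(D)$ is row-semistandard and not merely column-strict --- \emph{is} the hard content of the proposition, and nothing in your outline explains why a single Kohnert or K-Kohnert move preserves this invariant, especially in the presence of marked boxes. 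Your claim (ii) is moreover misstated: the flag bound $i_j+1$ on row $r-j$ of the tableau is an \emph{upper} bound on entries, equal to the initial position of the corresponding boxes, and it is preserved for the trivial reason that boxes only ever move left; the ``leftmost position a box can reach'' is irrelevant to the inclusion $\phi(\mcD_{w\lambda}) \subseteq F(\lambda;\lfloor w\rfloor)$ (it matters only for surjectivity, where the genuine lower constraint is column-strictness, not the flag). You candidly name this ``reachability bookkeeping'' as the main obstacle, which is to say the proposal stops exactly where the proof must begin.

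The device the paper uses to dissolve this bookkeeping --- and which is absent from your plan --- is to transport the moves to the tableau side. Definition~\ref{def:Kohnert_moves_tableaux} defines Kohnert and K-Kohnert moves directly on set-valued tableaux; Lemma~\ref{lem:Kohnert_svt} shows such moves send semistandard set-valued tableaux to semistandard set-valued tableaux; and the proof of the proposition then checks that $\phi$ intertwines the two kinds of moves, with the ``cannot pass a marked box'' condition on diagrams corresponding precisely to the ``intermediate boxes are singletons'' condition on tableaux. With this in hand, your (i) becomes automatic (images of tableau moves are semistandard), the flag condition becomes automatic (tableau moves only decrease entries in a row), and the surjectivity construction can be run entirely on tableaux with every intermediate object certified valid by Lemma~\ref{lem:Kohnert_svt}. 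So your route could in principle be completed by proving invariant (i) directly on diagrams, but as written there is a genuine gap: the statement you would need is exactly equivalent to what the paper proves via the tableau-side moves, and no argument for it is supplied.
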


\begin{ex}
\label{ex:phi_bijection}
Consider $\lambda = 2^2$ be a $2 \times 2$ square and $w = s_2$.
Under $\phi$ described above, we have
\[
\newcommand{\kohnert}[1]{\begin{tikzpicture}[scale=0.5] #1 \draw[very thin] (0,0) grid (3,2); \end{tikzpicture}}
\ytableausetup{boxsize=1.7em}
\begin{array}{c@{\qquad}c@{\qquad}c@{\qquad}c@{\qquad}c}
\kohnert{ \fill[darkred!60] (0,0) rectangle (1,2); \fill[darkred!60] (2,0) rectangle (3,2); } &
\kohnert{ \fill[darkred!60] (0,0) rectangle (1,2); \fill[darkred!60] (2,0) rectangle (3,1); \fill[darkred!60] (1,1) rectangle (2,2); } &
\kohnert{ \fill[darkred!60] (0,0) rectangle (1,2); \fill[darkred!60] (1,1) rectangle (2,2); \fill[darkred!60] (2,0) rectangle (3,2); \fill[black] (2.5,1.5) circle (0.2); } &
\kohnert{ \fill[darkred!60] (0,0) rectangle (1,2); \fill[darkred!60] (1,0) rectangle (2,2); } &
\kohnert{ \fill[darkred!60] (0,0) rectangle (2,2); \fill[darkred!60] (2,0) rectangle (3,1); \fill[black] (2.5,0.5) circle (0.2); }
\\
\ytableaushort{11,33} &
\ytableaushort{11,23} &
\ytableaushort{11,{2,\!3}3} &
\ytableaushort{11,22} &
\ytableaushort{11,2{2,\!3}}
\end{array}
\]
where we have shaded in the selected boxes and put a $\bullet$ in the marked boxes. Observe that these tableaux are exactly $\svt^n_w(\lambda)$.
\end{ex}

\begin{ex}
We continue Example~\ref{ex:phi_bijection} to $w' = s_1 s_2$ and obtain all of $\svt^3_{w'}(\lambda) =\svt^3(\lambda)$. We show the bijection $\phi$ on the remaining elements:
\[
\newcommand{\kohnert}[1]{\begin{tikzpicture}[scale=0.5] #1 \draw[very thin] (0,0) grid (3,2); \end{tikzpicture}}
\begin{array}{c@{\qquad}c@{\qquad}c@{\qquad}c@{\qquad}c}
\kohnert{ \fill[darkred!60] (1,0) rectangle (3,2); }
&
\kohnert{ \fill[darkred!60] (2,0) rectangle (3,2); \fill[darkred!60] (1,0) rectangle (2,1); \fill[darkred!60] (0,1) rectangle (1,2); }
&
\kohnert{ \fill[darkred!60] (1,0) rectangle (3,2); \fill[darkred!60] (0,1) rectangle (1,2); \fill[black] (1.5,1.5) circle (0.2);}
&
\kohnert{ \fill[darkred!60] (0,1) rectangle (1,2); \fill[darkred!60] (1,0) rectangle (2,2); \fill[darkred!60] (2,0) rectangle (3,1); }
&
\kohnert{ \fill[darkred!60] (0,1) rectangle (3,2); \fill[darkred!60] (1,0) rectangle (3,1); \fill[black] (2.5,1.5) circle (0.2); }
\\
\ytableaushort{22,33} &
\ytableaushort{12,33} &
\ytableaushort{{1,\!2}2,33} &
\ytableaushort{12,23} &
\ytableaushort{12,{2,\!3}3}
\\[22pt]
\kohnert{ \fill[darkred!60] (0,0) rectangle (1,2); \fill[darkred!60] (2,0) rectangle (3,2); \fill[darkred!60] (1,0) rectangle (2,1); \fill[black] (1.5,0.5) circle (0.2); } &
\kohnert{ \fill[darkred!60] (0,0) rectangle (2,2); \fill[darkred!60] (2,0) rectangle (3,1); \fill[black] (1.5,0.5) circle (0.2); } &
\kohnert{ \fill[darkred!60] (0,0) rectangle (3,2); \fill[black] (1.5,0.5) circle (0.2); \fill[black] (2.5,1.5) circle (0.2);}
\\
\ytableaushort{1{1,\!2},33} &
\ytableaushort{1{1,\!2},23} &
\ytableaushort{1{1,\!2},{2,\!3}3}
\end{array}
\]
\end{ex}

In order to prove Proposition~\ref{prop:K_skyline_bijection}, we first construct the equivalent (K-)Kohnert moves on (semistandard) set-valued tableaux.

\begin{dfn}[(K-)Kohnert moves on set-valued tableaux]
\label{def:Kohnert_moves_tableaux}
Let $T \in \svt^n(\lambda)$. Consider an entry $x \in \ZZ$ such that $x \in T$. Let $\mcC$ be the leftmost column of $T$ containing an $x$. Let $\bbb$ be the box in $\mcC$ containing $x$. Let $x'$ be minimal such that $x'+1, x'+2, \dotsc, x \in \mcC$, and let $\bbb'$ be the box in $\mcC$ containing $x'+1$. 

If $x' = 0$ or if $x \neq \min \bbb$ or if $\{x'+1\}, \dotsc, \{x-1\}$ are not in $\mcC$ (\textit{i.e.} if any of $x'+1, x'+2, \dotsc, x-1$ is not the only entry of its box in $\mcC$), then we do not have a \mbox{(K-)Kohnert} move corresponding to $x$. Otherwise define the \defn{Kohnert move} on $T$ to
\begin{enumerate}
\item remove $x$ from $\bbb$;
\item if $x' < x - 1$, then moving all entries $x'+1, \dotsc, x-1$ in $\mcC$ down one row (which in particular moves $x-1$ into $\bbb$); and
\item inserting $x'$ into $\bbb'$.
\end{enumerate}

A \defn{K-Kohnert move} is the same as a Kohnert move except we leave $x \in \bbb$.
\end{dfn}

\begin{lemma}
\label{lem:Kohnert_svt}
Let $T \in \svt^n(\lambda)$, and denote by $T'$ the result of applying any \mbox{(K-)Kohnert} move to $T$. Then $T' \in \svt^n(\lambda)$.
\end{lemma}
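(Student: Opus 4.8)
The plan is to verify directly that the output $T'$ satisfies the three defining conditions of a semistandard set-valued tableau---nonemptiness of each box, weak increase along rows, and strict increase down columns---while observing that the move is entirely local to the column $\mcC$. First I would record the structural consequences of the applicability hypotheses: the values $x'+1, \dotsc, x-1$ occupy singleton boxes $\{x'+1\}, \{x'+2\}, \dotsc, \{x-1\}$ in consecutive rows of $\mcC$, with $\bbb'$ the top one and $\bbb$ (whose minimum is $x$, by $x = \min\bbb$) immediately below $\{x-1\}$; the box $\bbb$ may additionally carry entries larger than $x$. With this picture, the net effect is transparent: the entries strictly below $\bbb$ and strictly above $\bbb'$ are untouched, while the run of singleton values is shifted down by one, so that $\bbb'$ becomes $\{x'\}$, each intermediate box has its entry lowered by one, and $\bbb$ receives $x-1$ (retaining $x$ in the K-Kohnert case). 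In particular $T'$ has the same shape as $T$, and every box of $T'$ is nonempty, since $\bbb'$ acquires $x' \ge 1$ and every other altered box still contains a lowered run value.

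Next I would check the column condition inside $\mcC$. The only delicate comparison is at the top of the run: the box directly above $\bbb'$ must have maximum strictly below the new minimum $x'$ of $\bbb'$. Here the hypothesis $x' \neq 0$ together with the minimality of $x'$ in its defining property forces $x' \notin \mcC$, so the box above $\bbb'$ has maximum at most $x'-1 < x'$. All remaining within-column comparisons are between consecutive lowered run values, or are inherited unchanged from $T$ below $\bbb$, and are immediate.

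The heart of the argument is the row condition, which is where I expect the main obstacle. Because every altered entry is lowered (and $\bbb$ merely has its minimum drop while its maximum is unchanged), the inequalities toward the right neighbors are preserved automatically: the new maximum of each altered box is at most its old maximum, which already sat below the minimum of the box to its right. The genuine difficulty is the left neighbors, whose maxima must now lie below the decreased minima. I would resolve this using the defining property that $\mcC$ is the \emph{leftmost} column containing $x$: the left neighbor $A_t$ of $\bbb$ cannot contain $x$, so $\max A_t \le x-1$, matching the new minimum $x-1$ of $\bbb$. I would then propagate this bound upward through the column to the left of $\mcC$, using its own strict column-increase, to obtain $\max A_{t-k} \le x-1-k$ for the left neighbor $A_{t-k}$ of the box $k$ rows above $\bbb$; since that box has its entry lowered to exactly $x-1-k$, every left-neighbor inequality then holds. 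All these left neighbors exist because the column heights of a Young diagram weakly decrease from left to right.

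Finally, the K-Kohnert variant differs only in that $\bbb$ also retains $x$; since this changes neither the maximum of $\bbb$ nor the relevant minimum $x-1$, the same verifications apply verbatim. The single step requiring care is thus the left-neighbor row inequality, and the key idea is that the leftmost-column hypothesis, propagated through the semistandardness of the adjacent column, exactly compensates for the downward shift of the run.
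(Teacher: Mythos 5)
Your proof is correct and takes essentially the same route as the paper's own: the column condition is handled via the minimality of $x'$ (forcing $x' \notin \mcC$), and the row condition is reduced to the left-neighbor comparisons, which are settled by the leftmost-column hypothesis ($\max \bbb^{\leftarrow} \leq x-1$) propagated upward through the strict column increase of the column left of $\mcC$. Your write-up is somewhat more explicit than the paper's (spelling out the bound $x-1-k$, the existence of left neighbors, and the K-Kohnert case), but the underlying argument is identical.
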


\begin{proof}
We consider only the Kohnert move as the K-Kohnert move is similar. We will use the notation from Definition~\ref{def:Kohnert_moves_tableaux}.

The column increasingness condition for $T'$ is satisfied since we only altered $\mcC$ and we took $x'$ to be minimal \textit{i.e.} any entries in or below $\bbb'$ must not contain an $x'$). 
For the row increasingness condition of $T'$, we note that we are decreasing some entries in $\mcC$, so it is sufficient to just consider the entries in the column $\mcC^{\leftarrow}$ directly to the left of $\mcC$. Let $\bbb^{\leftarrow}$ be the box immediately to the left of $\bbb$. By choice of $\mcC$ as leftmost, we must have $x \notin \mcC^{\leftarrow}$ and hence by the row semistandardness of $T$, we must have $\max \bbb^{\leftarrow} < x$. This implies that the box above $\bbb^{\leftarrow}$ must have all entries strictly less than $x - 1$ by column semistandardness of $T$. By iterating this argument, the analogous statement holds for all boxes in $\mcC$ weakly between $\bbb$ and $\bbb'$. Thus, $T' \in \svt^n(\lambda)$.
\end{proof}

Now we prove Proposition~\ref{prop:K_skyline_bijection} by using our flagging characterization of K-Demazure crystals from Lemma~\ref{lemma:flag_condition} and showing that $\phi$ intertwines the (K-)Kohnert moves on K-Kohnert diagrams with the (K-)Kohnert moves on set-valued tableaux.

\begin{proof}[Proof of Proposition~\ref{prop:K_skyline_bijection}]
By Lemma~\ref{lemma:flag_condition}, we have $\svt^n_w(\lambda) = F(\lambda; w)$, so it is sufficient to show $\phi(\mcD_{w\lambda}) = F(\lambda; w)$.

Let $D \in \mcD_{w\lambda}$ and $T = \phi(D)$.
It is straightforward to see that a Kohnert move moving $x$ in column $y$ of $T$ corresponds under $\phi$ to the Kohnert move on $D$ that moves an unmarked box $(x,y)$ to $(x',y)$. Note that taking the leftmost column in  $T$ is equivalent to taking a box in the top of the $x$-th column of $D$. We claim these are all possible Kohnert moves. Indeed, the condition that $x'+1, \dotsc, x-1$ are the only entry in their boxes corresponds to the fact that we did not cross a marked box in $D$. That $x' \geq 1$ is equivalent to the moved box in $D$ staying within $\ZZ_{>0}^2$ after the move. That $x = \min \bbb$ is equivalent to the box at $(x,y)$ in $D$ being unmarked. Hence, we obtain all possible Kohnert moves.
The claim for K-Kohnert moves is similar.

For any $T \in F(\lambda; w)$, any possible Kohnert or K-Kohnert move applied to $T$ yields another element in $F(\lambda; w)$ as entries in a particular row decrease. Thus we have $\phi(\mcD_{w\lambda}) \subseteq F(\lambda; w)$ as the initial skyline diagram corresponds to the tableau $T_{w\lambda}$ with all entries in row $r-k$ being $\{i_k\}$.

To show $\phi(\mcD_{w\lambda}) \supseteq F(\lambda; w)$, we will show that every set-valued tableau in $F(\lambda; w)$ can be obtained from $T_{w\lambda}$ by applying (K-)Kohnert moves. For a given flagged set-valued tableau $T \in F(\lambda; w)$, we start by applying (K-)Kohnert moves on the upper left box of $T_{w\lambda}$ until we get the entry in the upper left box of $T$. Recall that the corresponding (K-)Kohnert move acts on this entry/column since it acts on the leftmost applicable column. We repeat this process moving across the first \textit{i.e.} topmost) row. As such, there will be no interactions between the different Kohnert moves. We then repeat this for the second row, then the third, and so on until all entries have been changed to $T$. All the tableaux produced along the way lie in $\svt^n(\lambda)$ by Lemma~\ref{lem:Kohnert_svt}.

We claim this procedure always generates a sequence of (K-)Kohnert moves in $\mcD_{w\lambda}$. Indeed, what we are doing is moving the first column from the skyline diagram to its appropriate spots for the final K-Kohnert diagram $\phi^{-1}(T)$ starting from the top. It is easy to see that by doing this process, we are never moving a box across another box.\footnote{
One can also see this process as a set-valued variation of the Kohnert tableaux of~\cite{AS18:Kohnert} by doing the same labeling procedure on unmarked boxes and labeling any marked box $\bbb$ with an $x'$, where $x$ is the label of the rightmost unmarked box to the left of $\bbb$.
}
Hence, such (K-)Kohnert moves are always valid.
Thus, $\phi(\mcD_{w\lambda}) = F(\lambda; w)$.
\end{proof}

\begin{ex}
Let $\lambda = 3^2$ be a $2 \times 3$ rectangle and consider $n = 4$. We exhibit the sequence of (K-)Kohnert moves described in the proof of Proposition~\ref{prop:K_skyline_bijection} to obtain the element $\scalebox{0.7}{$\ytableaushort{1{1,\!2}2,{2,\!3}3{3,\!4}}$} \in \svt^4_{s_1 s_3 s_2} (\lambda)$ from the initial tableau $\scalebox{0.7}{$\ytableaushort{222,444}$}$\,:
\[
\newcommand{\kohnert}[1]{\begin{tikzpicture}[scale=0.4,baseline=13] #1 \draw[very thin] (0,0) grid (4,3); \end{tikzpicture}}
\newcommand{\regar}{\xleftarrow[\hspace{13pt}]{}}
\newcommand{\Kar}{\xleftarrow[\hspace{13pt}]{K}}
\ytableausetup{boxsize=1.7em}
\begin{array}{c@{\;}c@{\;}c@{\;}c@{\;}c@{\;}c@{\;}c@{}c}
\ytableaushort{1{1,\!2}2,{2,\!3}3{3,\!4}} & \Kar &
\ytableaushort{1{1,\!2}2,{2,\!3}34} & \regar &
\ytableaushort{1{1,\!2}2,{2,\!3}44} & \Kar &
\ytableaushort{1{1,\!2}2,344}
& \; \regar
\\[18pt]
\kohnert{ \fill[darkred!60] (0,1) rectangle (3,3); \fill[darkred!60] (1,0) rectangle (4,1); \fill[black] (1.5,1.5) circle (0.2); \fill[black] (2.5,2.5) circle (0.2); \fill[black] (3.5,0.5) circle (0.2);} & \Kar &
\kohnert{ \fill[darkred!60] (0,1) rectangle (3,3); \fill[darkred!60] (0,1) rectangle (1,3); \fill[black] (1.5,1.5) circle (0.2); \fill[darkred!60] (1,0) rectangle (2,1); \fill[darkred!60] (3,0) rectangle (4,1); \fill[black] (2.5,2.5) circle (0.2); } & \regar &
\kohnert{ \fill[darkred!60] (1,0) rectangle (2,2); \fill[darkred!60] (0,1) rectangle (1,3); \fill[black] (1.5,1.5) circle (0.2); \fill[darkred!60] (3,0) rectangle (4,2); \fill[darkred!60] (1,2) rectangle (3,3); \fill[black] (2.5,2.5) circle (0.2); } & \Kar &
\kohnert{ \fill[darkred!60] (1,0) rectangle (2,2); \fill[darkred!60] (0,1) rectangle (1,3); \fill[black] (1.5,1.5) circle (0.2); \fill[darkred!60] (3,0) rectangle (4,2); \fill[darkred!60] (2,2) rectangle (3,3); }
& \;\regar
\\[20pt]
& \regar &
\ytableaushort{1{1,\!2}2,444} & \Kar &
\ytableaushort{122,444} & \regar &
\ytableaushort{222,444}
\\[18pt]
& \regar &
\kohnert{ \fill[darkred!60] (1,0) rectangle (2,2); \fill[darkred!60] (0,1) rectangle (1,3); \fill[black] (1.5,1.5) circle (0.2); \fill[darkred!60] (3,0) rectangle (4,3); } & \Kar &
\kohnert{ \fill[darkred!60] (1,0) rectangle (2,2); \fill[darkred!60] (0,2) rectangle (1,3); \fill[darkred!60] (3,0) rectangle (4,3); } & \regar &
\kohnert{ \fill[darkred!60] (1,0) rectangle (2,3); \fill[darkred!60] (3,0) rectangle (4,3); }
\end{array}
\]
where the diagrams under $\phi^{-1}$ are given below the set-valued tableaux.
\end{ex}

\begin{remark}
\label{rem:Key_property}
We note that the proof of the intertwining of (K-)Kohnert moves did not require $\lambda$ to be a rectangle. However, that $\phi$ is a bijection does require that $\lambda$ is a rectangle as otherwise the image of the skyline diagram will not be a partition. For example,
\[
\ytableausetup{boxsize=1.5em}
\begin{tikzpicture}[scale=0.5,baseline=13]
\fill[darkred!60] (1,0) rectangle (2,2); \fill[darkred!60] (2,0) rectangle (3,1);
\draw[very thin] (0,0) grid (3,2);
\end{tikzpicture}
\quad \longmapsto \quad
\ytableaushort{22,\none3}\,.
\]
\end{remark}

\begin{thm}
For $\lambda = s^r$ an $r \times s$ rectangle, we have
\[
L_a(\xx; \beta) = \sum_{D \in \mcD_a}  \wt_{\beta}(D);
\]
Hence, the Ross--Yong--Kirillov Conjecture (Conjecture~\ref{conj:K_skyline}) holds for $L_{a}$ when $a$ is any weak composition with a unique nonzero part size.
\end{thm}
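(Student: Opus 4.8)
The plan is to deduce this from two results already in hand: the $\beta$-weight-preserving bijection of Proposition~\ref{prop:K_skyline_bijection} and the K-crystal character axiom~(\hyperref[Krystal:character]{K.3}) verified in Theorem~\ref{thm:Krystal_rectangles}. First I would unpack the hypothesis on $a$. A weak composition $a \in \ZZ_{\geq 0}^n$ has a unique nonzero part size exactly when all of its nonzero entries equal a common value $s$; sorting $a$ into decreasing order then produces the rectangular partition $\lambda = s^r$, where $r$ is the number of nonzero parts of $a$. By the definition of Lascoux polynomials, there is a unique $w \in \sym_n^{\lambda}$ with $a = w\lambda$, and $L_a(\xx;\beta) = L_{w\lambda}(\xx;\beta)$.

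Next I would invoke Proposition~\ref{prop:K_skyline_bijection}, which for this rectangular $\lambda$ supplies a $\beta$-weight-preserving bijection $\phi \colon \mcD_{w\lambda} \to \svt^n_w(\lambda)$ from the K-Kohnert diagrams obtainable from the skyline diagram of $a = w\lambda$ to the K-Demazure crystal. Summing $\wt_\beta$ across this bijection gives
\[
\sum_{D \in \mcD_a} \wt_\beta(D) = \sum_{T \in \svt^n_w(\lambda)} \wt_\beta(T) = \ch_\beta\bigl(\svt^n_w(\lambda)\bigr),
\]
where the second equality is just the definition of the $\beta$-character together with the identity $\excess(T) = |\wt(T)| - |\lambda|$. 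Finally, Theorem~\ref{thm:Krystal_rectangles} asserts that $\svt^n(\lambda)$ is a K-crystal, so its character property~(\hyperref[Krystal:character]{K.3}) yields $\ch_\beta\bigl(\svt^n_w(\lambda)\bigr) = L_{w\lambda}(\xx;\beta) = L_a(\xx;\beta)$. Combining the two identities proves the displayed formula, and the statement about Conjecture~\ref{conj:K_skyline} follows because, as noted above, weak compositions with a unique nonzero part size are precisely those whose sorted shape is a rectangle.

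I do not expect a genuine obstacle to remain at this stage: all the substantive work lives in the cited results, namely the intertwining of the (K-)Kohnert moves on diagrams with the tableau (K-)Kohnert moves of Definition~\ref{def:Kohnert_moves_tableaux} inside Proposition~\ref{prop:K_skyline_bijection}, and the verification of the K-crystal axioms in Theorem~\ref{thm:Krystal_rectangles} (which itself rests on the flagging characterization of Lemma~\ref{lemma:flag_condition} and the full commutativity of the relevant coset representatives). The only points demanding any care are purely bookkeeping: confirming the identification $a = w\lambda$ for a suitable $w \in \sym_n^{\lambda}$, and checking that $\wt_\beta$ on K-Kohnert diagrams matches $\wt_\beta$ on set-valued tableaux under $\phi$, both of which are transparent from the definitions.
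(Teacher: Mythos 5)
Your proposal is correct and follows essentially the same route as the paper's own (one-line) proof: both deduce the formula by combining the $\beta$-weight-preserving bijection of Proposition~\ref{prop:K_skyline_bijection} with the K-crystal structure of Theorem~\ref{thm:Krystal_rectangles} and its character axiom~(K.3), the flagging characterization of Lemma~\ref{lemma:flag_condition} entering implicitly through those two results. Your additional bookkeeping (identifying $a = w\lambda$ for the unique $w \in \sym_n^{\lambda}$ and matching the two notions of $\wt_\beta$) is exactly the routine verification the paper leaves unstated.
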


\begin{proof}
This follows from the definition of a K-crystal together with Lemma~\ref{lemma:flag_condition}, Theorem~\ref{thm:Krystal_rectangles}, and Proposition~\ref{prop:K_skyline_bijection}.
\end{proof}

To the best of our knowledge, no other cases of Conjecture~\ref{conj:K_skyline} have been previously established.

\section{Bijection with set-valued skyline tableaux}
\label{sec:skyline_tableaux}

Consider a partition $\lambda$ and permutation $w$.
Define
\begin{equation}
\label{eq:svt_demazure_atom}
\overline{\svt}^n_w(\lambda) := \svt^n_w(\lambda) \setminus \bigcup_{v < w} \svt^n_v(\lambda),
\end{equation}
where the union is taken over all $v$ strictly less than $w$ in Bruhat order.
We have
\[
\overline{\varpi}_w \xx^{\lambda} = \sum_{v \leq w} (-1)^{\ell(w) - \ell(v)} \varpi_v \xx^{\lambda}
\]
by applying M\"obius inversion on Bruhat order and Equation~\eqref{eq:lascoux_to_atoms}.
Therefore, Conjecture~\ref{conj:skyline_tableaux} is equivalent by inclusion-exclusion to showing that
\begin{equation}
\label{eq:equivalent_conjecture}
\overline{L}_{w\lambda} = \ch_{\beta} \left( \overline{\svt}^n_w(\lambda) \right).
\end{equation}

\begin{prop}
\label{prop:skyline_tableaux_bijection}
Let $\lambda = s^r$ be an $r \times s$ rectangle.
For any $w \in \sym_n^{\lambda}$, there exists a $\beta$-weight preserving bijection
\[
\psi \colon \skyline_{w\lambda} \to \overline{\svt}^n_w(\lambda).
\]
\end{prop}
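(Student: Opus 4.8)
The plan is to build the bijection $\psi \colon \skyline_{w\lambda} \to \overline{\svt}^n_w(\lambda)$ directly, and then verify it respects the $\beta$-weight. Since both sides are indexed by the same $w \in \sym_n^{\lambda}$, I would first want a clean description of each side. On the target side, by Lemma~\ref{lemma:flag_condition} we have $\svt^n_w(\lambda) = F(\lambda; \lfloor w \rfloor)$, so the set-difference $\overline{\svt}^n_w(\lambda)$ in Equation~\eqref{eq:svt_demazure_atom} is the set of flagged set-valued tableaux whose flagging is \emph{exactly} $w$ and not that of any $v < w$; concretely, using the coset data $i_k < \cdots < i_0$ from Lemma~\ref{lemma:coset_repr}, these are the tableaux where the flag in each relevant row $r-j$ is \emph{saturated}, i.e.\ the value $i_j+1$ actually appears in row $r-j$ (otherwise the tableau would already satisfy a strictly smaller flag and hence lie in some $\svt^n_v(\lambda)$ with $v < w$). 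So the first step is to pin down precisely this saturation condition characterizing $\overline{\svt}^n_w(\lambda)$ among flagged tableaux.

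The second step is to read the defining conditions (S.1)--(S.5) of a set-valued skyline tableau $S \in \skyline_{w\lambda}$ and match them against the structure of a rectangular flagged set-valued tableau. The map $\psi$ should be essentially a transposition-and-sort: a skyline tableau of shape $a = w\lambda$ has its $r$ nonzero columns (each of height $s$, since $\lambda = s^r$) sitting in columns indexed by the support of $a$, and I would send $S$ to the set-valued Young tableau whose rows record, in sorted order, the data of these columns. The anchor condition (S.5) (anchors in the bottom row equal their column index) and the weak-increase condition (S.2) are what force the resulting object to be a genuine semistandard set-valued tableau of shape $s^r$, while the free-entry condition (S.4) and the inversion-triple condition (S.3) are precisely what encode the \emph{saturated} flagging, i.e.\ membership in $\overline{\svt}^n_w(\lambda)$ rather than the larger $\svt^n_w(\lambda)$. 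I would check that $\psi$ is a well-defined map into $\overline{\svt}^n_w(\lambda)$, construct the inverse explicitly (reading the rows of a flagged tableau back off as columns placed according to the support of $w\lambda$ and the flag data), and verify $\wt$ and $\excess$ are preserved box-by-box, which is immediate since no entries are created or destroyed.

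The main obstacle I expect is the interplay of conditions (S.3) and (S.4) with the exact (as opposed to $\leq$) flagging that defines $\overline{\svt}^n_w(\lambda)$. Translating the inversion-triple condition (S.3), stated locally on pairs of columns of the skyline diagram, into the global ``saturation'' statement that the flag value $i_j+1$ genuinely occurs in each row $r-j$ is the delicate part; one must show that (S.3) together with (S.4) is equivalent to the tableau not collapsing to a strictly smaller coset representative $v < w$. It will help that in the rectangular case all the relevant $w$ are fully-commutative (by~\cite[Thm.~6.1]{Stembridge96}, as used in Theorem~\ref{thm:Krystal_rectangles}), so the Bruhat covers $v \lessdot w$ correspond simply to lowering one of the flag bounds $i_j$, which keeps the combinatorial bookkeeping tractable. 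A reasonable alternative to the direct approach, which may sidestep some of this, is to prove the character identity~\eqref{eq:equivalent_conjecture} first via the K-crystal structure and then deduce the bijection; but since the statement asks specifically for a bijection $\psi$, I would pursue the explicit construction, using inclusion-exclusion over $v \leq w$ only as a consistency check that the cardinalities and $\beta$-characters agree.
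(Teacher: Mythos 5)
Your proposal has the same skeleton as the paper's proof: you characterize $\overline{\svt}^n_w(\lambda)$ inside the flagged tableaux by saturation (each row must actually attain its flag bound --- this is exactly the paper's first step, and your justification via the union over $v<w$ is sound), you take $\psi$ to be the same sort-anchors/place-free-entries/transpose map, and weight preservation is indeed immediate. The paper's execution differs in one labor-saving respect: instead of constructing the inverse and verifying semistandardness from scratch, it observes that $\psi$ is a restriction of Monical's bijection $\hat{\rho}$ of \cite[Thm.~2.4]{Monical16} from set-valued skyline tableaux to \emph{reverse} semistandard set-valued tableaux, composed with the $180^{\circ}$ rotation of rows and columns (a bijection onto ordinary semistandard set-valued tableaux precisely because $\lambda$ is a rectangle). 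This disposes of well-definedness and injectivity in one stroke, leaving only the identification of the image; your self-contained route re-derives that part of Monical's theorem in the rectangular case, which is legitimate but substantially more work.

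There is, however, one concrete misstep: you assert that (S.3) and (S.4) ``are precisely what encode the saturated flagging,'' and you plan to prove that (S.3) together with (S.4) is equivalent to not collapsing to a smaller coset representative $v < w$. That equivalence is false, and the step would fail as stated. Saturation is encoded by (S.5) together with (S.2): by (S.2) the bottom row of $S \in \skyline_{w\lambda}$ carries the largest entries of the columns, by (S.5) its anchors equal their column indices, i.e.\ the support of $w\lambda$; under the transpose these anchors become the maxima of the rows of $\psi(S)$, and the sorted support of $w\lambda$ is exactly the flag vector of $w$ --- this one-line observation is the paper's entire image argument. Conditions (S.1), (S.3) and (S.4) play a different role: they are what make the sorted transpose semistandard and the free-entry placement recoverable (i.e.\ they are the content of Monical's bijection), not what cuts $\svt^n_w(\lambda)$ down to the atom $\overline{\svt}^n_w(\lambda)$. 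Once the axioms are reassigned to their correct jobs your plan goes through; note also that your Bruhat-cover bookkeeping via full commutativity can be skipped entirely, since for any $T \in \svt^n_w(\lambda)$ the row maxima form a strictly increasing sequence (compare rightmost boxes in consecutive rows) and hence are themselves the flag vector of some $v \in \sym_n^{\lambda}$ with $v \leq w$; thus $T \in \overline{\svt}^n_w(\lambda)$ if and only if this sequence equals the flag vector of $w$, which is the characterization both you and the paper use.
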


\begin{proof}
Let $i_k < \cdots < i_0$ be from the reduced expression of $w$ given by Lemma~\ref{lemma:coset_repr}.
By Lemma~\ref{lemma:flag_condition}, we can characterize $\overline{\svt}^n_w(\lambda)$ by $T \in \overline{\svt}^n_w(\lambda)$ if and only if, for all $0 \leq k < r$, row $k$ of $T$ has at least one $i_k + 1$ in it and no number strictly greater that $i_k+1$. By the fact that the rows of $T$ are weakly increasing, an equivalent characterization is that the largest entry of the rightmost box in row $k$ of $T$ must be $i_k+1$.

Define a map $\psi \colon \skyline_{w\lambda} \to \overline{\svt}^n_w(\lambda)$ as follows. Consider some $S \in \skyline_{w\lambda}$ and define $T := \psi(S)$ by
\begin{enumerate}
\item sorting the anchor entries in each row in increasing order left to right and join the columns together;
\item placing each free entry $f$ in the leftmost box of its row such that $f$ is less than the anchor entry (\textit{i.e.} so that the row is strictly increasing but the free entries remain free);
\item take the transpose of the result from the previous step; that is construct the $i$-th column of $T$ from the $(r+1-i)$-th row as in Section~\ref{sec:Kohnert}.
\end{enumerate}
To see that $\psi$ is injective and that its image is contained in $\svt^n(\lambda)$, we note that $\psi$ is a restriction of the bijection $\hat{\rho}$ of~\cite[Thm.~2.4]{Monical16} from set-valued skyline tableaux to \emph{reverse} semistandard set-valued tableaux, except that we have reversed the rows and columns of the image tableaux. Indeed, reversing the rows and the columns for rectangular shapes is clearly a bijection from reverse semistandard set-valued tableaux to (ordinary) semistandard set-valued tableaux.

To see that the image of $\psi$ is $\overline{\svt}^n_w(\lambda)$, we note that the anchors of the first row of $S$ are already in increasing order. Thus, the anchor of the $k$-th column of $S$ under $\psi$ is the largest entry in the $k$-th row of $\psi(S)$. By construction, the largest entry of the $k$-th column of $S$ is $i_k + 1$, and hence, $\psi$ is surjective.

Finally, it is clear that $\psi$ is $\beta$-weight preserving, so $\psi$ is the desired bijection.
\end{proof}

\begin{ex}
Let $\lambda = 2^2$ be a $2 \times 2$ rectangle and $n = 3$. Then the set-valued skyline tableaux $\skyline_{s_2\lambda}$ and their corresponding element in $\overline{\svt}^3_{s_2}(\lambda)$ under $\psi$ are given by
\begin{align*}
\ytableausetup{boxsize=1.7em}
\begin{ytableau}
1 & \none & 3 \\
1 & \none & 3
\end{ytableau}
& \quad \longmapsto \quad 
\ytableaushort{11,33}\,,
& 
\begin{ytableau}
1 & \none & 2 \\
1 & \none & 3
\end{ytableau}
& \quad \longmapsto \quad 
\ytableaushort{11,23}\,,
\\
\begin{ytableau}
1 & \none & 2,\!3 \\
1 & \none & 3
\end{ytableau}
& \quad \longmapsto \quad 
\ytableaushort{11,{2,\!3}3}\,,
&
\begin{ytableau}
1 & \none & 2 \\
1 & \none & 2,\!3
\end{ytableau}
& \quad \longmapsto \quad 
\ytableaushort{11,2{2,\!3}}\,.
\end{align*}
\end{ex}

\begin{ex}
Let $\lambda = 4^3$ be a $4 \times 3$ rectangle, $n = 10$, and
\[
w = (s_4 s_3 s_2) (s_5 s_4 s_3) (s_8 s_7 s_6 s_5 s_4).
\]
Consider the following is a set-valued skyline tableau in $\skyline_{w\lambda}$
\[
\ytableausetup{boxsize=1.7em}
S := 
\begin{ytableau}
1 & \none & \none & \none & 3 & 4 & \none & \none & 2 \\
1 & \none & \none & \none & 3 & {4,\!6} & \none & \none & {2,\!7} \\
1 & \none & \none & \none & {4,\!5} & 6 & \none & \none & {7,\!9} \\
\end{ytableau}
\]
Applying the steps of the bijection $\psi$, we obtain
\[
S \; \longmapsto \;
\ytableaushort{1234,1367,1569}
\; \longmapsto \;
\ytableaushort{1234,1{2,\!3}{4,\!6}7,1{4,\!5}6{7,\!9}}
\; \longmapsto \;
\ytableaushort{111,2{2,\!3}{4,\!5},3{4,\!6}6,47{7,\!9}} \in \overline{\svt}^n_w(\lambda).
\]
\end{ex}

%

To the best of our knowledge, the following theorem is the first to prove any case of Conjecture~\ref{conj:skyline_tableaux}.
\begin{thm}
For $\lambda = s^r$ an $r \times s$ rectangle, we have
\[
\overline{L}_{w\lambda} = \sum_{S \in \skyline_{w\lambda}} \wt_{\beta}(S).
\]
Hence, Monical's Skyline Conjecture (Conjecture~\ref{conj:skyline_tableaux}) holds for $L_{a}$ when $a$ is any weak composition with a unique nonzero part size.   
\end{thm}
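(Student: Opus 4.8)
The plan is to deduce the identity by assembling the bijection $\psi$ of Proposition~\ref{prop:skyline_tableaux_bijection} with the K-crystal structure of Theorem~\ref{thm:Krystal_rectangles}, using M\"obius inversion on Bruhat order as the connective tissue. As in the definition of $L_a$, I take $w$ to be the minimal coset representative in $\sym_n^\lambda$ determined by the weak composition $a = w\lambda$; both $\skyline_{w\lambda}$ and $\overline{L}_{w\lambda}$ depend only on this data. The first step is to observe that, because the bijection $\psi \colon \skyline_{w\lambda} \to \overline{\svt}^n_w(\lambda)$ of Proposition~\ref{prop:skyline_tableaux_bijection} preserves the $\beta$-weight,
\[
\sum_{S \in \skyline_{w\lambda}} \wt_{\beta}(S) = \sum_{T \in \overline{\svt}^n_w(\lambda)} \wt_{\beta}(T) = \ch_{\beta}\bigl(\overline{\svt}^n_w(\lambda)\bigr),
\]
so the theorem is equivalent to the identity \eqref{eq:equivalent_conjecture}, namely $\overline{L}_{w\lambda} = \ch_{\beta}(\overline{\svt}^n_w(\lambda))$.

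It then remains to prove \eqref{eq:equivalent_conjecture}, and here I would invoke the K-crystal. Property~(K.3) of Theorem~\ref{thm:Krystal_rectangles} gives $\ch_{\beta}(\svt^n_v(\lambda)) = L_{v\lambda}$ for every $v$, while the flag characterization $\svt^n_v(\lambda) = F(\lambda; \lfloor v \rfloor)$ of Lemma~\ref{lemma:flag_condition} shows that the K-Demazure crystals are nested along Bruhat order and that each $T$ lies in a unique Bruhat-minimal $\svt^n_v(\lambda)$. Consequently the set-difference definition \eqref{eq:svt_demazure_atom} yields a disjoint decomposition $\svt^n_w(\lambda) = \bigsqcup_{v \le w} \overline{\svt}^n_v(\lambda)$, and taking $\beta$-characters gives $L_{w\lambda} = \sum_{v \le w} \ch_{\beta}(\overline{\svt}^n_v(\lambda))$. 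Placing this beside the expansion $L_{w\lambda} = \sum_{v \le w} \overline{L}_{v\lambda}$ from \eqref{eq:lascoux_to_atoms} and inverting both relations through the M\"obius function of Bruhat order, I would conclude $\ch_{\beta}(\overline{\svt}^n_w(\lambda)) = \overline{L}_{w\lambda}$, which is \eqref{eq:equivalent_conjecture}. Combined with the first step, this gives the displayed identity, and the statement for arbitrary $a$ follows since every weak composition with a single nonzero part size arises as $w\lambda$ for a rectangle $\lambda$.

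The substantive mathematics is carried by the earlier results, so the one point I expect to require genuine care is the atom decomposition $\svt^n_w(\lambda) = \bigsqcup_{v \le w} \overline{\svt}^n_v(\lambda)$: I must verify that the set $\{v : T \in \svt^n_v(\lambda)\}$ has a unique Bruhat-minimal element for each $T$, so that \eqref{eq:svt_demazure_atom} genuinely partitions rather than merely covers. I expect this to follow cleanly from Lemma~\ref{lemma:flag_condition}, since the flag bounding each row of $T$ is read directly off the row maxima and the Bruhat order on $\sym_n^\lambda$ refines those bounds monotonically; the remaining subtlety is purely bookkeeping, namely confirming that only $v \in \sym_n^\lambda$ contribute nonzero atoms, so that the two M\"obius inversions are indexed by the same poset.
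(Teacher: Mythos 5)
Your proposal is correct and takes essentially the same route as the paper: the paper likewise combines the $\beta$-weight-preserving bijection $\psi$ of Proposition~\ref{prop:skyline_tableaux_bijection} with property (K.3) from Theorem~\ref{thm:Krystal_rectangles} and the inclusion--exclusion/M\"obius-inversion reduction to Equation~\eqref{eq:equivalent_conjecture}, merely leaving implicit the details you spell out. The two points you flag as needing care (that each $T$ lies in a unique Bruhat-minimal $\svt^n_v(\lambda)$, so the atoms genuinely partition, and that only $v \in \sym_n^{\lambda}$ contribute) are exactly the details the paper suppresses, and both resolve as you expect via Lemma~\ref{lemma:flag_condition} and the identification of Bruhat order on $\sym_n^{\lambda}$ with the componentwise order on flags.
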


\begin{proof}
This follows from the definition of a K-crystal together with Theorem~\ref{thm:Krystal_rectangles}, Equation~\eqref{eq:equivalent_conjecture}, and Proposition~\ref{prop:skyline_tableaux_bijection}.
\end{proof}

\section{K-key tableaux}
\label{sec:Key_tableaux}

A \defn{key tableau} $K$ is a semistandard tableau such that the entries in the $j$-th column of $K$ are a subset of those in the $(j-1)$-st column of $K$.
One method to compute a Demazure character $\kappa_{w\lambda}$ is by summing over all semistandard tableaux of shape $\lambda$ whose (right) key entries are less than corresponding entry in the unique key tableaux $K_{w\lambda}$ of weight $w\lambda$~\cite{LS90}. (This fact explains why Demazure characters are also known as key polynomials.)

Furthermore, every semistandard tableau $T$ has a unique \defn{(right) key tableau} $k(T)$ associated with it (we refer the reader to~\cite{Willis13} for an algorithm), and a Demazure atom can be computed as a generating function for all semistandard tableaux $T$ with $k(T) = K_{w\lambda}$~\cite{LS90}. (See~\cite{PW15} for much further discussion of these (and related) formulas.)
Let $\prec$ denote the partial order on semistandard tableaux of shape $\lambda$ such that $T \preceq T'$ if and only if every entry of $T$ is at most the corresponding entry in $T'$.

Based on the bijection from Proposition~\ref{prop:skyline_tableaux_bijection} and the (K-)Kohnert moves on set-valued tableaux (Definition~\ref{def:Kohnert_moves_tableaux}), the following is a natural possible extension of key tableaux to the K-theory setting.
For $T \in \svt^n(\lambda)$, define $\mcK(T) := k\bigl( \max(T) \bigr)$, where $\max(T)$ is semistandard tableau obtained by taking the greatest entry in each box of $T$.
Thus Theorem~\ref{thm:Krystal_rectangles} and Lemma~\ref{lemma:flag_condition} imply that for $\lambda = r^s$
\begin{equation}
\label{eq:rectangle_lascoux}
L_{w\lambda}(\xx; \beta) = \sum_{\substack{T \in \svt^n(\lambda) \\ \mcK(T) \preceq K_{w\lambda}}} \wt_{\beta}(T),
\qquad\qquad
\overline{L}_{w\lambda}(\xx; \beta) = \sum_{\substack{T \in \svt^n(\lambda) \\ \mcK(T) = K_{w\lambda}}} \wt_{\beta}(T),
\end{equation}
or equivalently summed over $\svt^n_w(\lambda)$ and $\overline{\svt}^n_w(\lambda)$ respectively.
However, these formulas do not work for general $\lambda$ as, for example,
\[
\ytableausetup{boxsize=2.1em}
\mcK\left( \ytableaushort{{1}{1,\!2,\!3},{2,\!3}} \right) = \ytableaushort{13,3}\,, \ytableausetup{boxsize=1.5em}
\] 
but it can only contribute to the Lascoux polynomial/atom corresponding to $w_0 \lambda$, where $\lambda = 21$, as it has an excess of $3$.
Moreover, the weak K-crystal in~\cite[Fig.~7]{MPS18} does \emph{not} decompose the K-crystal into atoms as given by Equation~\eqref{eq:svt_demazure_atom} as \scalebox{0.7}{$\ytableaushort{1{2,\!3},3}$} should not be in the atom corresponding to $w_0$.

Instead, we conjecture that Equation~\eqref{eq:rectangle_lascoux} should be modified by using the Lusztig involution to obtain a combinatorial interpretation of general Lascoux polynomials and atoms.
Recall that the Lusztig involution on the highest weight crystal $B(\mu)$ is defined by sending the highest weight element $U$ to the lowest weight element $U^*$ and extended to the remaining elements in $B(\mu)$ by
\begin{equation}
\label{eq:lusztig_involution}
(f_{n-i} T)^* = e_i(T^*),
\qquad
(e_{n-i} T)^* = f_i(T^*),
\qquad
\wt(T^*) = w_0 \wt(T).
\end{equation}
We can extend this naively to $\svt^n(\lambda)$ by acting on each irreducible component $B(\mu)$.
Define the \defn{(right) K-key tableau} of a set-valued tableau $T \in \svt^n(\lambda)$ by
\[
K(T) := k(\min(T^*)^*),
\]
where $\min(T)$ is the semistandard tableau obtained from $T$ by taking the least entry in each box of $T$.

\begin{conj}
\label{conj:Key}
Let $\lambda$ be a partition.
Define the sets
\begin{align*}
\svt^n_w(\lambda) & := \{ T \in \svt^n(\lambda) \mid K(T) \preceq K_{w\lambda} \},
\\ \overline{\svt}^n_w(\lambda) & := \{ T \in \svt^n(\lambda) \mid K(T) = K_{w\lambda} \}.
\end{align*}
Then we have
\[
L_{w\lambda}(\xx; \beta) = \sum_{T \in \svt^n_w(\lambda)} \wt_{\beta}(T),
\qquad\qquad
\overline{L}_{w\lambda}(\xx; \beta) = \sum_{T \in \overline{\svt}^n_w(\lambda)} \wt_{\beta}(T),
\]
\end{conj}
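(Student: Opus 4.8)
The plan is to deduce Conjecture~\ref{conj:Key} for arbitrary $\lambda$ by reducing it to a crystal-theoretic statement and then to an inductive identity for the Demazure--Lascoux operators, using the rectangular case (Theorem~\ref{thm:Krystal_rectangles}) both as a model and as a consistency check. First I would dispatch the equivalence of the two displayed identities. Since $\min(T^*)^*$ is an ordinary semistandard Young tableau of shape $\lambda$, its right key $k(\min(T^*)^*) = K(T)$ is always a genuine key tableau, hence equal to $K_{v\lambda}$ for a unique $v \in \sym_n^{\lambda}$; thus the sets $\overline{\svt}^n_w(\lambda) = \{T : K(T) = K_{w\lambda}\}$ partition $\svt^n(\lambda)$. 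Combined with the classical fact that $K_{v\lambda} \preceq K_{w\lambda}$ if and only if $v \leq w$ in Bruhat order, this gives the disjoint decomposition $\svt^n_w(\lambda) = \bigsqcup_{v \leq w} \overline{\svt}^n_v(\lambda)$, so by M\"obius inversion on Bruhat order and Equation~\eqref{eq:lascoux_to_atoms} the two identities are equivalent. It then suffices to prove the full identity $L_{w\lambda}(\xx;\beta) = \ch_\beta\big(\svt^n_w(\lambda)\big)$.

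Second, I would reduce this to the ordinary theory of Demazure crystals via the crystal isomorphism $\svt^n(\lambda) \cong \bigoplus_{\lambda \subseteq \mu} B(\mu)^{\oplus M_\lambda^\mu}$ of~\cite[Thm.~3.9]{MPS18}. The key lemma to establish is that for each irreducible component $C \cong B(\mu)$, the intersection $\svt^n_w(\lambda) \cap C$ is empty or an ordinary Demazure crystal $B_{v}(\mu)$ for a suitable $v = v(C,w)$. The role of the Lusztig involution in the definition $K(T) = k(\min(T^*)^*)$ is precisely to make this work: conjugating by $*$ sends the excess (large) entries of a set-valued tableau to small entries, which are then discarded by $\min$, so that $K(T)$ records only the honest $\fsl_n$-crystal direction and is insensitive to the excess. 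I would prove the lemma by combining the classical right-key characterization of Demazure membership, $b \in B_v(\mu) \iff k(b) \preceq K_{v\mu}$, with the compatibility of $*$ and $\min$ with the crystal operators as in Equation~\eqref{eq:lusztig_involution}; this simultaneously shows that the condition $K(T) \preceq K_{w\lambda}$ is intrinsically independent of any reduced word for $w$.

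Third, with the per-component Demazure description in hand, I would prove $\ch_\beta(\svt^n_w(\lambda)) = \varpi_w \xx^\lambda = L_{w\lambda}$ by induction on $\ell(w)$. The base case $w = e$ reduces to verifying that $\svt^n_e(\lambda)$ is the single excess-zero tableau, of $\beta$-character $\xx^{\lambda}$. For the inductive step with $s_i w > w$, I would show that the passage $\svt^n_w(\lambda) \to \svt^n_{s_i w}(\lambda)$ is realized by closing $\svt^n_w(\lambda)$ under the $i$-K-string operation, i.e.\ under $f_i$ together with a single application of $f_i^K$, where the $i$-K-strings are well defined for any shape since restricting to a fixed $i$ involves only signature cancellation. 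On each component this closure is the usual Demazure recursion (multiplication of the character by $\pi_i$) decorated by the single $f_i^K$ step that supplies exactly the correction $\varpi_i = \pi_i(1 + \beta x_{i+1}\,\cdot\,)$; summing over components yields multiplication of $\ch_\beta$ by $\varpi_i$, completing the induction.

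The main obstacle is the per-component Demazure lemma of the second step together with its reduced-word independence for non-rectangular $\lambda$. In the rectangular case the relevant coset representatives are fully commutative (cf.\ the proof of Theorem~\ref{thm:Krystal_rectangles}), so no braid relations intervene and one may iterate the K-operators directly; for general $\lambda$ there is provably no honest K-crystal, and so the $i$-K-string closure defining the recursion could a priori depend on the chosen reduced word. The crux is therefore to show that the intrinsic, word-free characterization $K(T) \preceq K_{w\lambda}$ cuts out exactly the same set as every such closure, equivalently that the induced recursion is consistent across all reduced words and compatible with the braid relations satisfied by the $\varpi_i$. As a guiding consistency check I would first verify that on rectangles the statistic $K(T)$ recovers membership in $\svt^n_w(\lambda)$ as given by Equation~\eqref{eq:rectangle_lascoux}, matching $\mcK(T) = k(\max(T))$ on that locus.
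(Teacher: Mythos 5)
Your proposal targets a statement that the paper itself leaves open: Conjecture~\ref{conj:Key} is genuinely a conjecture, and the paper proves it only when $\lambda$ is a rectangle (by showing, via the K-Lusztig involution $\star$ of Proposition~\ref{prop:Lusztig_reversal}, that it then coincides with the established formula of Equation~\eqref{eq:rectangle_lascoux}). So there is no general proof in the paper to compare against, and a correct blind proof would constitute a new theorem. Measured against that bar, your argument has a genuine gap exactly where you place your ``main obstacle,'' and the surrounding steps do not close it. In Step 2, the key lemma --- that $\svt^n_w(\lambda)$ meets each irreducible component $C\cong B(\mu)$ in either $\emptyset$ or an honest Demazure crystal $B_{v}(\mu)$ --- is asserted, with the justification that $*$ and $\min$ are ``compatible with the crystal operators.'' But $\min$ (and $\max$) do not intertwine the crystal operators on $\svt^n(\lambda)$: the operator $f_i$ can move an entry from one box to an adjacent one, changing the minima of two boxes simultaneously, so $T \mapsto \min(T^*)^*$ is not a crystal morphism, and the classical right-key criterion for Demazure membership cannot simply be pulled back along it. No proof of this lemma is sketched beyond the appeal to compatibility, yet it is essentially the entire content of the conjecture.

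Step 3 is also not salvageable as stated. The induction on $\ell(w)$ requires the closure of $\svt^n_w(\lambda)$ under $f_i$ together with one application of $f_i^K$ to be independent of the chosen reduced word, i.e., compatible with the braid relation $\varpi_i\varpi_{i+1}\varpi_i = \varpi_{i+1}\varpi_i\varpi_{i+1}$. The paper explicitly recalls that this fails for non-rectangular shapes: for $\lambda=(2,1)$ there is provably no K-crystal, the K-Demazure closure depends on the commutation class of the reduced word (see \cite[Fig.~6,7]{MPS18}), and the weak K-crystal of \cite[Fig.~7]{MPS18} does not decompose into the atoms of Equation~\eqref{eq:svt_demazure_atom}. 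Moreover, even granting well-definedness, the claim that closing under $i$-K-strings multiplies the $\beta$-character by $\varpi_i$ requires the string property of Corollary~\ref{cor:demazure_strings} (each $i$-K-string meets the set in $\emptyset$, its head, or all of it), which the paper can only prove for rectangles via the flagging of Lemma~\ref{lemma:flag_condition}; you would need a word-free substitute for that flagging, which is precisely what is missing. Your Step 1 (equivalence of the two displayed identities via M\"obius inversion on Bruhat order) and your base case $w=e$ are correct, and your closing consistency check on rectangles is exactly what the paper does prove; but after your argument the general case remains open, not proved.
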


Although at first glance Conjecture~\ref{conj:Key} looks rather different from our proved formulas in the rectangular cases, we now will show that Equation~\eqref{eq:rectangle_lascoux} establishes Conjecture~\ref{conj:Key} when $\lambda$ is a rectangle.\footnote{Conjecture~\ref{conj:Key} has now been proven in~\cite{BSW20}.}
To do so, we will construct a \defn{K-Lusztig involution}
\[\star \colon \svt^n(\lambda) \to \svt^n(\lambda)\]
 that also satisfies Equation~\eqref{eq:lusztig_involution}, but is a twist of the Lusztig involution by an automorphism of the $U_q(\fsl_n)$-crystal $\svt^n(\lambda)$ (\textit{i.e.} it nontrivially permutes the irreducible components).
Let $\lambda = r^s$ be a rectangle and $T \in \svt^n(\lambda)$. Define $T^{\star}$ to be the set-valued tableau obtained by rotating the tableau $180^{\circ}$ and then replacing each $i \mapsto n+1-i$.
We note this is a well-known description of the Lusztig involution (also known as the Sch\"utzenberger involution or evacuation~\cite{Lenart07}) on semistandard tableaux of shape $\lambda$.

\begin{prop}
\label{prop:Lusztig_reversal}
Let $\lambda$ be a rectangle.
The K-Lusztig involution $\star$ satisfies Equation~\eqref{eq:lusztig_involution}.
Moreover, for $T \in \svt^n(\lambda)$ as a tensor product of rows $T = R_1 \otimes \cdots \otimes R_k$, we have
\[
T^{\star} = R_k^* \otimes \cdots \otimes R_1^*.
\]
\end{prop}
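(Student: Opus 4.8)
The plan is to reduce the whole proposition to the case of a single row, using the factorization of $\svt^n(\lambda)$ as a tensor product of rows. The weight identity $\wt(T^{\star}) = w_0\wt(T)$ is immediate, since rotating $180^\circ$ does not change the multiset of entries and the relabelling $i \mapsto n+1-i$ sends the $i$-content to the $(n+1-i)$-content, which is exactly the action of $w_0$ on $\ZZ^n$. For the rest, the point is that because $\lambda$ is a rectangle the $180^\circ$ rotation preserves its shape and factors as ``reverse the order of the rows'' followed by ``reverse and relabel each individual row.'' Writing $R^{\star}$ for the single-row reverse-and-relabel operation, this says precisely that $T^{\star} = R_k^{\star} \otimes \cdots \otimes R_1^{\star}$, where $T = R_1 \otimes \cdots \otimes R_k$ reads the rows top to bottom (one checks the top row is the first tensor factor in the convention compatible with the column signature rule defining $e_i,f_i$). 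Hence both the product formula and the involution identities reduce to understanding the single-row operation $R^{\star}$.

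The base case is the claim that on a single-row crystal, i.e.\ $\svt^n$ of a one-row shape, the map $R \mapsto R^{\star}$ coincides with the genuine Lusztig involution $R \mapsto R^{*}$. I would establish this from two ingredients. First, single-row crystals are multiplicity-free: the highest-weight rows are exactly those of the form $(\{1\}, \dotsc, \{1\}, \{1,2,\dotsc,m\})$ (any further set-valuedness in an interior box creates an uncanceled $\bminus$), and these have the pairwise distinct weights $(s,1^{m-1})$, so any self-map satisfying \eqref{eq:lusztig_involution} is forced to be $*$. Second, $R^{\star}$ does satisfy \eqref{eq:lusztig_involution}: reversing the boxes and applying $x \mapsto n+1-x$ carries a box with an unpaired $\bplus$ for $i$ to a box with an unpaired $\bminus$ for $n-i$ and reverses their left-to-right order, so the reduced $i$-signature of $R^{\star}$ is the reverse-and-swap of the reduced $(n-i)$-signature of $R$; one then checks that the two branches of the operator (acting in place versus shifting an entry to the horizontally adjacent box) are exchanged by the left-right reversal, yielding $e_i(R^{\star}) = (f_{n-i}R)^{\star}$ and $f_i(R^{\star}) = (e_{n-i}R)^{\star}$.

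Assembling these gives $R_j^{\star} = R_j^{*}$ for each row, hence $T^{\star} = R_k^{*} \otimes \cdots \otimes R_1^{*}$, which is the ``Moreover'' statement. To conclude that $\star$ itself satisfies \eqref{eq:lusztig_involution}, I would invoke the general principle that for any $U_q(\fsl_n)$-crystal $B$ with Lusztig involution $\Phi$, the map $b_1 \otimes \cdots \otimes b_k \mapsto \Phi(b_k) \otimes \cdots \otimes \Phi(b_1)$ on $B^{\otimes k}$ is a \emph{solution} of \eqref{eq:lusztig_involution}. This is verified exactly as in the base case, one level up: reversing the tensor factors and applying $\Phi$ to each turns the $i$-signature of the image into the reverse-and-swap of the $(n-i)$-signature of the original, because concatenation of signatures interacts with reverse-and-swap by reversing the block order. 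Since $\svt^n(\lambda)$ embeds in $B^{\otimes k}$ as a subcrystal (semistandardness between consecutive rows is preserved by the crystal operators) and $\star$ maps $\svt^n(\lambda)$ into itself, restricting this solution recovers $\star$ and the identities hold on $\svt^n(\lambda)$.

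The main obstacle I expect is the base case, and within it the set-valued bookkeeping of the signature computation: one must check carefully that the ``donate/receive an entry from the adjacent box'' branch of $f_{n-i}$ matches the corresponding branch of $e_i$ after the left-right flip, tracking which entry is added or removed under the relabelling. It is worth emphasizing why this does \emph{not} collapse to the classical identity $\star = *$: the full rectangular crystal is not multiplicity-free (for instance two copies of $B(2,2,1)$ occur in $\svt^3(2^2)$), so although the tensor-reversal map $\star$ solves \eqref{eq:lusztig_involution}, uniqueness fails at the top level and $\star$ genuinely permutes the isomorphic components, in contrast to the component-preserving $*$.
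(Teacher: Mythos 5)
Your proof is correct, but it reverses the logical structure of the paper's argument while filling in more detail. The paper first proves that $\star$ satisfies Equation~\eqref{eq:lusztig_involution} by direct inspection of the column signature rule on the whole rectangle (with details left to the reader), and only then deduces the single-row identity $R^{\star} = R^{*}$ by induction on depth, which gives the ``Moreover'' formula. You instead prove the single-row case first and self-containedly --- via the signature analysis on a row together with the observation that single-row crystals are multiplicity-free (highest weight rows are exactly $(\{1\},\dotsc,\{1\},\{1,\dotsc,m\})$ with pairwise distinct weights, forcing any solution of Equation~\eqref{eq:lusztig_involution} to equal $*$) --- and then bootstrap to the rectangle using the standard fact that factor reversal composed with the componentwise Lusztig involution solves Equation~\eqref{eq:lusztig_involution} on a tensor product. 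Your route buys two things: the paper's ``details left to the reader'' step becomes an appeal to a general tensor-product principle rather than a bare-hands verification on rectangles, and the multiplicity-freeness remark cleanly explains both why the row case is forced and why $\star$ genuinely differs from $*$ on the full crystal (your count of two copies of $B(2,2,1)$ inside $\svt^3(2^2)$ matches Figure~\ref{fig:Krystal_ex_22}). The cost is that your final step needs the crystal structure of Section~\ref{sec:background}, which assigns signs to whole columns of the rectangle, to coincide with the restriction of the tensor-product structure on $\svt^n((s))^{\otimes r}$, with $\svt^n(\lambda)$ closed under the ambient operators; you assert this in a single parenthesis, but it is precisely the sort of set-valued bookkeeping that needs a proof or a citation to~\cite{MPS18}. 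The paper presupposes the same identification when it writes $T = R_1 \otimes \cdots \otimes R_k$, yet its proof of the first claim does not depend on it, whereas yours does; shoring up that one compatibility statement is all that separates your argument from a complete proof.
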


\begin{proof}
The first claim follows from the definition of the crystal operators. We leave the details to the reader.
For the second claim, we first note that $T^{\star} = R_k^{\star} \otimes \cdots \otimes R_1^{\star}$, and so it is sufficient to show $R_1^{\star} = R_1^*$.
This follows from a straightforward induction on depth (\textit{i.e.}, the number of crystal operators applied from the highest weight element) and Equation~\eqref{eq:lusztig_involution}.
\end{proof}

Proposition~\ref{prop:Lusztig_reversal} also suggests that Conjecture~\ref{conj:Key} holds for a definition of a (right) K-key tableau by
\[
K'(T) := k(\min(T^{\dagger})^*),
\]
where $T^{\dagger}$ is constructed from $T$ according to \emph{any} automorphism of $\svt^n(\lambda)$ such that $\wt(T^{\dagger}) = w_0 \wt(T)$.
However, given a (weak) K-crystal structure on $\svt^n(\lambda)$, it would be preferable to have a $T^{\dagger}$ construction that matches the labeling of tableaux $T$ by K-keys $K'(T)$ with the decomposition of the K-crystal by K-Demazure subcrystals, as is the case with our K-Lusztig involution $T^\star$.
Furthermore, it is likely that in general we want $T^{\star} = R_k^* \otimes \cdots \otimes R_1^*$ as in Proposition~\ref{prop:Lusztig_reversal}, but this would require an appropriate K-rectification or insertion scheme in order to obtain a result back in $\svt^n(\lambda)$.

We also believe there exists an insertion scheme analogous to the one given by S.~Mason in~\cite[Sec.~3.3]{Mason08} to construct a bijection between $\overline{\svt}_w^n(\lambda)$ and $\skyline_{w\lambda}$. This will possibly be a variant of the insertion given in~\cite{Buch02} similar to how Mason's map is a variant of the classical RSK algorithm.

\bibliographystyle{alpha}
\bibliography{crystals}{}
\end{document}